\documentclass[12pt,letterpaper]{amsart}

\oddsidemargin0.25in
\evensidemargin0.25in
\textwidth6.00in
\topmargin0.00in
\textheight8.50in

\newcommand{\indentalign}{\hspace{0.3in}&\hspace{-0.3in}}
\newcommand{\la}{\langle}
\newcommand{\ra}{\rangle}
\renewcommand{\Re}{\operatorname{Re}}
\renewcommand{\Im}{\operatorname{Im}}

\newcommand{\sech}{\operatorname{sech}}
\newcommand{\defeq}{\stackrel{\rm{def}}{=}}

\newcommand{\spn}{\operatorname{span}}

\usepackage{amssymb}
\usepackage{amsthm}
\usepackage{amsxtra}
\usepackage{graphicx}

\newtheorem{theorem}{Theorem}

\newtheorem{lemma}[theorem]{Lemma}

\theoremstyle{remark}

\numberwithin{equation}{section}

\numberwithin{theorem}{section}

\numberwithin{table}{section}

\numberwithin{figure}{section}

\ifx\pdfoutput\undefined
  \DeclareGraphicsExtensions{.pstex, .eps}
\else
  \ifx\pdfoutput\relax
    \DeclareGraphicsExtensions{.pstex, .eps}
  \else
    \ifnum\pdfoutput>0
      \DeclareGraphicsExtensions{.pdf}
    \else
      \DeclareGraphicsExtensions{.pstex, .eps}
    \fi
  \fi
\fi

\title[NLS soliton interaction]
{Phase-driven interaction of widely separated nonlinear Schr\"odinger solitons}

\author{Justin Holmer}
\author{Quanhui Lin}
\address{Brown University}

\begin{document}

\maketitle

\begin{abstract}
We show that, for the 1d cubic NLS equation, widely separated equal amplitude in-phase solitons attract and opposite-phase solitons repel.  Our result gives an exact description of the evolution of the two solitons valid until the solitons have moved a distance comparable to the logarithm of the initial separation.  Our method does not use the inverse scattering theory and should be applicable to nonintegrable equations with local nonlinearities that support solitons with exponentially decaying tails.  The result is presented as a special case of a general framework which also addresses, for example, the dynamics of single solitons subject to external forces as in \cite{HZ1, HZ2}.
\end{abstract}

\section{Introduction}
\label{S:introduction}

We consider the 1d nonlinear Schr\"odinger equation (NLS)
\begin{equation}
\label{E:NLS}
i\partial_t u + \tfrac12 \partial_x^2 u + |u|^2u=0 \,.
\end{equation}
It has a single soliton solution $u(x,t)=e^{it/2}\sech x$.  The invariances of \eqref{E:NLS} can be applied to produce a whole family of solutions.  To describe them, let
\footnote{We order the parameters as $(\mu,a,\theta,v)$ to mimic $(q^1,q^2,p^1,p^2)$ as canonical coordinates for the four dimensional symplectic space with symplectic form $dp^1\wedge dq^1 + dp^2 \wedge dq^2$.}
\begin{equation}
\label{E:eta}
\eta(x,\mu,a,\theta,v) = e^{i\theta}e^{i\mu^{-1}v(x-a)}\mu \sech(\mu (x-a))
\end{equation}
for parameters $\theta, a, v \in \mathbb{R}$, $\mu>0$.  Then $u(x,t) = \eta(x,\mu(t),a(t),\theta(t),v(t))$ solves \eqref{E:NLS} provided
\begin{equation}
\label{E:free-flow}
\left\{ 
\begin{aligned}
&\mu(t) = \mu_0 \\
&a(t) = a_0 + tv_0\mu_0^{-1} \\
&\theta(t) = \theta_0 + \frac12t(\mu_0^2+\mu_0^{-2}v_0^2) \\
&v(t) = v_0
\end{aligned} 
\right.
\end{equation}

In this paper, we study the evolution of initial data that is the sum of two widely separated solitons:
\begin{equation}
\label{E:initial-data}
\begin{aligned}
u_0(x) &= \eta(x,\mu_{10},a_{10},\theta_{10},v_{10}) + \eta(x,\mu_{20},a_{20},\theta_{20},v_{20})
\end{aligned}
\end{equation}
where $|a_{20}-a_{10}| \gg 1$.   In particular, we focus on two illustrative cases.  In both cases, we consider identical mass solitons with zero initial velocity. In Case 0, we take the same initial phase, corresponding to an even superposition and in Case 1, we take opposite initial phase corresponding to an odd superposition.
\begin{equation}
\label{E:case01}
u_0(x) = 
\left\{
\begin{aligned}
&\eta(x,1,-a_0,0,0) + \eta(x,1,a_0,0,0) && \text{Case }\sigma=0\\
&\eta(x,1,-a_0,\pi,0) + \eta(x,1,a_0,0,0) && \text{Case }\sigma=1
\end{aligned}
\right.
\end{equation}

\newcommand{\sol}{\textnormal{sol}}
\newcommand{\eff}{\textnormal{eff}}

We find that in the same phase case (Case 0), the two solitons are drawn toward each other and in the opposite phase case (Case 1) they are pushed apart-- see Fig. \ref{F:1}.  In either case, the solution $u$ to \eqref{E:NLS} is well-approximated by
\begin{equation}
\label{E:V-10}
u_z(x) = \eta(x,\mu_1,a_1,\theta_1,v_1) + \eta(x,\mu_2,a_2,\theta_2,v_2)
\end{equation}
where $z$ represents coordinates\footnote{Superscripts are used on $z$ to conform with geometric summation conventions used later in the paper.}
\begin{equation}
\label{E:V-11}
z = (z^1,z^2,z^3,z^4,z^5,z^6,z^7,z^8) = (\mu_1,a_1,\mu_2,a_2,\theta_1,v_1,\theta_2,v_2)
\end{equation}
As parity is preserved by the flow \eqref{E:NLS}, we have
\begin{equation}
\label{E:V-12}
\mu\defeq \mu_1=\mu_2 \,, \quad a\defeq -a_1=a_2 \,, \quad v \defeq -v_1=v_2 \,,
\end{equation}
and $\theta \defeq \theta_1 =\theta_2$ in the same phase case (Case 0), while $\theta \defeq \theta_1-\pi=\theta_2$ in the opposite phase case (Case 1).

\begin{figure}
\begin{center}
\includegraphics[scale=0.4]{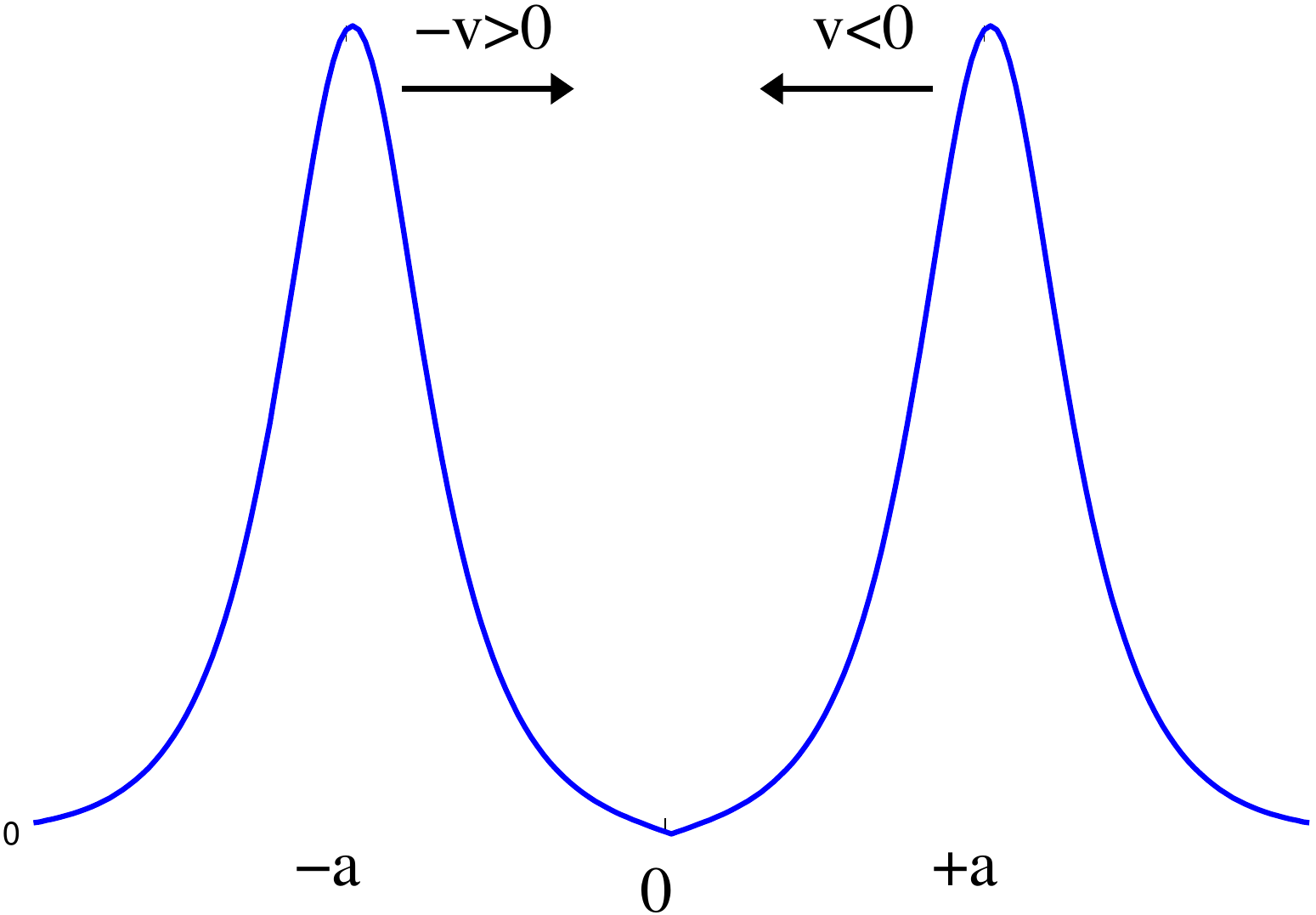}\\
\bigskip
\includegraphics[scale=0.4]{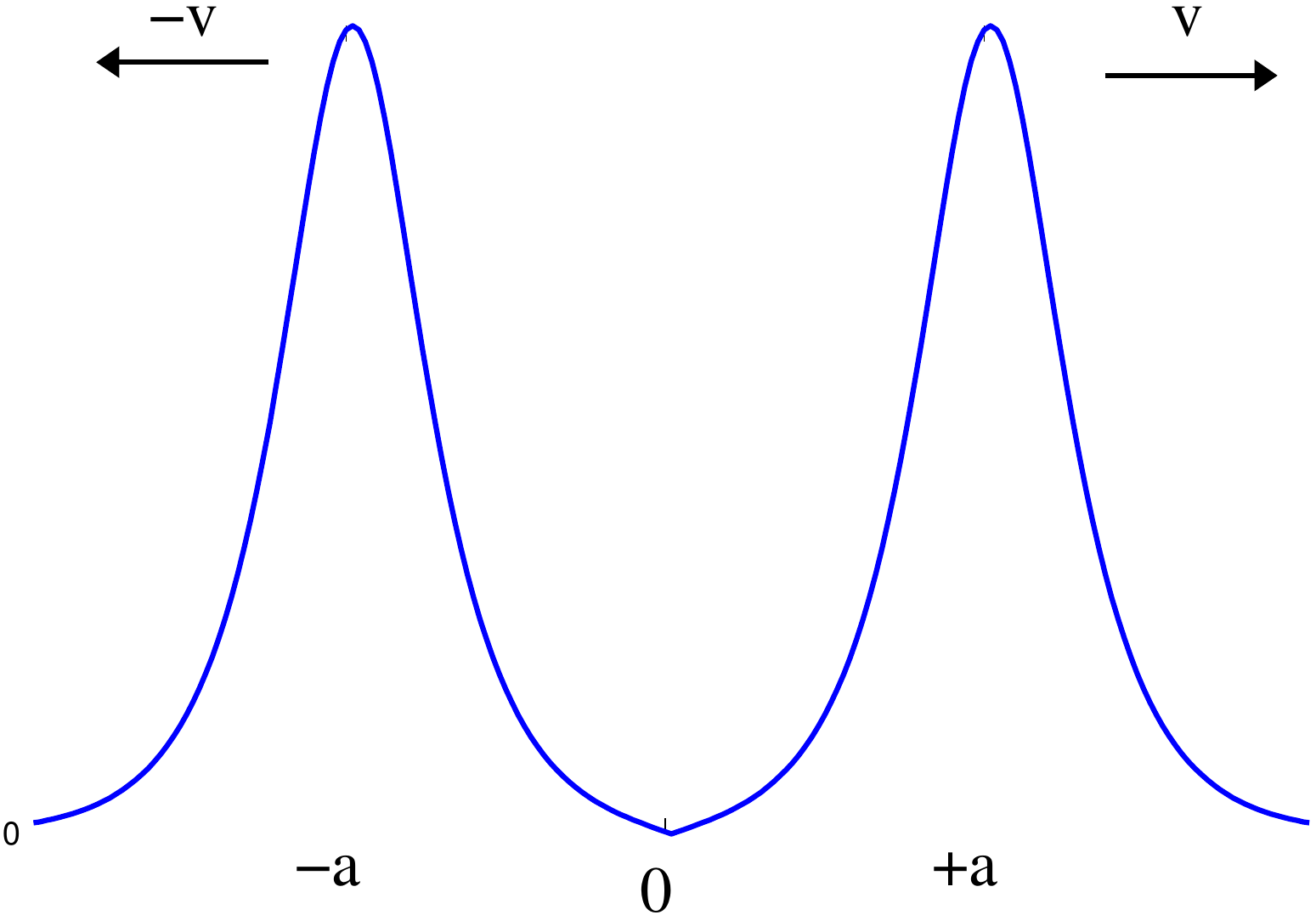}
\end{center}
\caption{
\label{F:1}
The top plot is a depiction of Case 0 (same phase; even solution), where the two solitons are pulled toward each other.  The bottom plot depicts Case 1 (opposite phase; odd solution), where they repel. In each case, the solution is modeled in Theorem \ref{T:main1} as $u\approx u_z = \eta(\mu,-a,\theta+\sigma \pi,-v)+\eta(\mu,a,\theta,v)$ where $(\mu,a,\theta,v)$ solve a specific ODE system.
}
\end{figure}

\begin{theorem}
\label{T:main1}
Suppose that $u(t)$ is the solution to \eqref{E:NLS} with initial data \eqref{E:case01}.  Let $h=e^{-a_0}\ll 1$ (so $a_0=\log h^{-1}\gg 1$).  Let 
$$
T \sim
\begin{cases}
h^{-1} & \text{Case }\sigma=0 \\
h^{-1}\log h^{-1} & \text{Case }\sigma=1 
\end{cases} 
$$
Let $(a(t),v(t))$ solve 
\begin{equation}
\label{E:aveqn}
\left\{
\begin{aligned}
&\dot a = v \\
&\dot v = -4(-1)^\sigma e^{-2a}
\end{aligned}
\right.
\end{equation}
with initial data $(a_0,0)$.  Let $\mu$ solve
\begin{equation}
\label{E:V-6}
\dot \mu = (-1)^\sigma (8a-4)ve^{-2a} \,,
\end{equation}
and then let $\theta$ solve
\begin{equation}
\label{E:V-7}
\dot \theta = \frac12 \mu^2 + \frac12v^2\mu^{-2} + 18(-1)^\sigma e^{-2a}\,.
\end{equation}
Then on $0\leq t \leq T$, we have
$$\|u(t)-u_z \|_{H_x^1} \lesssim h^{2-} \,,$$
where
\begin{equation}
\label{E:V-5}
u_z = \eta(\mu,-a,\theta+\sigma \pi,-v)+\eta(\mu,a,\theta,v)
\end{equation}
\end{theorem}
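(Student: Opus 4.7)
The plan is a modulational analysis adapted to the symplectic structure of NLS, extending the single-soliton framework of the cited Holmer--Zworski papers to two interacting solitons. Decompose $u(x,t) = u_z(x,t) + w(x,t)$, where $u_z$ is the two-soliton ansatz \eqref{E:V-10} carrying the eight modulation parameters $z = (z^1,\dots,z^8)$ from \eqref{E:V-11}, and $w$ is the error. Following the geometry hinted at by the footnote to \eqref{E:eta}, impose eight symplectic orthogonality conditions $\omega(w,\partial_{z^j}u_z)=0$, where $\omega(f,g) = \Im \int f\bar g$ is the natural symplectic form on $L^2$. By the implicit function theorem, near the two-soliton manifold these conditions uniquely determine $z(t)$ from $u(t)$.

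Substituting into \eqref{E:NLS} yields the error equation
\begin{equation*}
i\partial_t w + \tfrac12 \partial_x^2 w + \mathcal{L}_z w = -F(z,\dot z) + \mathcal{N}(w),
\end{equation*}
where $\mathcal{L}_z$ is the time-dependent linearization at $u_z$, $\mathcal{N}(w)$ collects the quadratic and cubic terms in $w$, and the source $F(z,\dot z)$ measures the failure of $u_z$ to solve NLS exactly. The source splits into a modulation part $\sum_j (\dot z^j - Z_{\mathrm{free}}^j)\, \partial_{z^j}u_z$, with $Z_{\mathrm{free}}$ the free vector field from \eqref{E:free-flow} applied independently to each soliton, and an interaction part $|\eta_1+\eta_2|^2(\eta_1+\eta_2) - |\eta_1|^2\eta_1 - |\eta_2|^2\eta_2$, which is $O(e^{-a})$ pointwise but integrates to $O(e^{-2a})$ against localized test functions. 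Pairing the equation symplectically against each $\partial_{z^j}u_z$ and using the orthogonality conditions to eliminate the leading $w$-dependence produces the modulation ODE $\dot z = Z_{\mathrm{free}}(z) + Z_{\mathrm{int}}(z) + O(\|w\|_{H^1}^2)$.

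The computation of $Z_{\mathrm{int}}(z)$ then reduces to explicit integrals. Under the parity reduction \eqref{E:V-12}, the eight-dimensional system collapses to the four variables $(\mu,a,\theta,v)$. In the region where $\eta_2$ is concentrated, $\eta_1 \approx 2\mu\, e^{i\theta_1} e^{-\mu(x+a)}$, so the relevant overlap integrals are $e^{-2\mu a}$ times elementary $\sech$-integrals, and the relative phase factor $e^{i\sigma\pi}=(-1)^\sigma$ propagates to the final coefficients. Carrying out these computations produces the force $\dot v = -4(-1)^\sigma e^{-2a}$ in \eqref{E:aveqn}, the mass drift \eqref{E:V-6}, and the phase correction in \eqref{E:V-7}. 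The time scale $T$ is then read off from the resulting ODE: in Case $\sigma=0$, the attractive force causes $a$ to collapse from $a_0$ to order one in time $\sim h^{-1}$; in Case $\sigma=1$, energy conservation $\tfrac12 \dot a^2 + 2e^{-2a} = 2h^2$ gives $\dot a \to 2h$, so doubling the separation requires time $\sim h^{-1}\log h^{-1}$.

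The main obstacle is the bootstrap control of $\|w\|_{H^1}$ on the long interval $[0,T]$. The natural Lyapunov functional is $\mathcal{E}(w) = E(u_z+w) - E(u_z) - dE(u_z)[w]$, where $E$ is the NLS Hamiltonian; it is almost conserved modulo a drift governed by $\dot z - Z_{\mathrm{free}}(z) - Z_{\mathrm{int}}(z)$, which is controlled by $\|w\|_{H^1}^2$ once the modulation equations are enforced. Coercivity $\mathcal{E}(w) \gtrsim \|w\|_{H^1}^2$ follows from the standard Weinstein-type analysis of the linearization around a single soliton, once the eight neutral directions spanning the generalized kernel of $\mathcal{L}_z$ have been removed by the symplectic orthogonality. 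The delicate issue is that $T$ is much larger than the internal soliton timescale, so a naive Gronwall loss $e^{Ct}$ would be catastrophic; one must exploit the bilinearity of the coupling $\langle F, w \rangle$ together with the pointwise smallness $\|F\|_{H^1} \lesssim h^{2-}$ to obtain only polynomial-in-$t$ growth, after which the bootstrap assumption $\|w\|_{H^1} \leq h^{2-}$ closes and yields the claimed bound on $[0,T]$.
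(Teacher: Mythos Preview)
Your outline captures the modulational framework correctly, but there is a genuine gap that prevents reaching the claimed accuracy $\|w\|_{H^1}\lesssim h^{2-}$. After the modulation equations are enforced, the forcing in the $w$-equation is not eliminated: what remains is the symplectically \emph{perpendicular} part of the defect, $\Pi_z^\perp JH'(u_z)$, which is of size $h^{2-}$ in $H^1$. A source of this size acting over time $T\sim h^{-1}$ drives $\|w\|$ only to order $h$, not $h^{2}$; your proposal to ``exploit bilinearity of $\langle F,w\rangle$'' does not help, since the offending term is linear in $w$ with an $O(h^2)$ coefficient and there is no further cancellation. The paper removes this obstruction in \S4 by constructing an explicit correction $\nu_z=O(h^2)$, depending on $z$ alone, such that the shifted profile $\tilde u_z=u_z+\nu_z$ satisfies $\partial_t\tilde u_z=JH'(\tilde u_z)+O(h^{4-})$, i.e.\ an approximate solution with the projection removed. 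This is obtained by pulling each soliton back to rest and inverting the linearized operator $S=\tfrac12 M''(\phi)+H''(\phi)$ on $(\ker S)^\perp$; the forcing in the equation for $\tilde w=u-\tilde u_z$ then drops to $O(h^{4-})$ and the bootstrap can close.

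A second issue concerns your Lyapunov functional. The Hessian of the Hamiltonian $H$ alone at a soliton is not coercive; one needs the Weinstein combination with mass and momentum. For two solitons with $v_1=-v_2$ the momentum contributions must moreover be \emph{localized} around each soliton (the Martel--Merle--Tsai device), and it is the slow drift of these localized quantities that supplies the crucial factor $h$---not $O(1)$---in front of $\|\tilde w\|_{H^1}^2$ in the estimate for $\partial_t L$. Combined with the $O(h^{4-})$ forcing from the corrected profile, one obtains $|\partial_t L|\lesssim hL+h^{5-}$, hence $L\lesssim e^{cht}h^{4-}\lesssim h^{4-}$ for $t\lesssim \delta h^{-1}\log h^{-1}$; this is an honest Gronwall argument with small rate, not an avoidance of exponential growth.
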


Let us make some remarks on the ODE system \eqref{E:aveqn}.  The energy associated to this system is
$$H_\eff = v^2 - 4(-1)^\sigma e^{-2a} = -4(-1)^\sigma e^{-2a_0}$$
In the case $\sigma =0$ (same phase), we have $v\leq 0$, $a\leq a_0$ and 
$$
\left\{
\begin{aligned}
&a(t)=a_0-\log \sec (2ht)\\
&v(t) = 2h\tan(2ht)
\end{aligned}
\right.
$$
valid for $0\leq t\lesssim e^{a_0}=h^{-1}$.
In the case $\sigma =1$ (opposite phase), we have $v\geq 0$, $a\geq a_0$ and
$$
\left\{
\begin{aligned}
&a(t)=a_0+\log \cosh(2ht) \\
&v(t)=2h\tanh(2ht)
\end{aligned}
\right.
$$
valid for $0\leq t\lesssim a_0e^{a_0} = h^{-1}\log h^{-1}$.  In either case, $\mu$ evolving according to \eqref{E:V-6} satisfies $|\mu-1| \lesssim h^2$ and can thus be replaced by $1$ in \eqref{E:V-5}.  However, the $O(h^2)$ behavior of $\mu$ is dynamically significant in that it yields $O(h^1)$ effects in $\theta$ through \eqref{E:V-7}.  It is evident from the explicit forms for $a(t)$ given above that, on the indicated time scale, the soliton has moved a distance comparable to $\log a_0$.

We remark that although \eqref{E:NLS} is completely integrable, we do not use the inverse scattering theory of Zakharov-Shabat \cite{ZS}.  We expect that one could compute the scattering data associated to our initial condition and conduct an analysis using inverse scattering theory that would describe the dynamics for all time.  Our argument, however, has the merit of being relatively simple and should adapt to most nonintegrable nonlinearities that support stable solitons with exponentially decaying tails.  An important example of such a nonintegrable equation is the 1d cubic-quintic NLS:
$$i\partial_t u + \frac12 \partial_x^2 u + |u|^2u - \epsilon |u|^4u=0$$
Furthermore, our goal was not just to obtain Theorem \ref{T:main1} but to present it in the conceptual (yet rigorous) framework of symplectic restriction that illustrates its connection to previous work of the first author, Holmer-Zworski \cite{HZ1, HZ2}.

We cite two papers from the physics literature as motivation for our problem.  Stegeman-Segev \cite{SS} provide an overview of phase-driven two-soliton interaction in the context of optics, beginning with an account of the 1d case \eqref{E:NLS} that we study (see their Fig. 4) and proceeding to a discussion of two-soliton interaction in two dimensions in which the attractive forces between in-phase solitons can lead to spiraling structures -- see their Fig. 6.  The NLS equation  also arises in a completely different physical setting, Bose-Einstein condensation.  Strecker et.al. \cite{Str} describe an experiment producing multiple solitons, in which the model is \eqref{E:NLS} with a confining potential.  A train of five solitons with successively opposite phases are produced and oscillate in a well.  At the peak of the oscillations, the solitons bunch up but retain some separation; \cite{Str} explains this in terms of their phase differences.

We will now give an explanation of Theorem \ref{T:main1} and an overview of the proof.  Consider $L^2(\mathbb{R};\mathbb{C})$ as a manifold with metric 
$$g_u(v_1,v_2) = \la v_1,v_2\ra \defeq \Re \int v_1 \bar v_2 \quad \text{for} \quad u\in L^2, \; v_1,v_2 \in T_uL^2 \simeq L^2 \,.$$
Introduce $J=-i$, viewed as an operator  $T_uL^2\to T_uL^2$.  The corresponding symplectic form is
\begin{equation}
\label{E:V-16}
\omega_u(v_1,v_2) = \la v_1, J^{-1}v_2\ra
\end{equation}
Take as Hamiltonian the (densely defined, with domain\footnote{This domain is chosen so that $JH'(u) = -i(-\frac12 u_{xx}-|u|^2u) \in L^2$.  Although we restrict to $u\in D=H^2$ here, we will prove estimates on the corresponding flow in $H^1$.  This parallels the situation in the theory of linear self-adjoint operators $A$, where a dense domain is specified but the flow associated to $-iA$ extends to a unitary operator on all of $L^2$.}
 $D=H^2$) function $H:L^2\to \mathbb{R}$ given by
\begin{equation}
\label{E:V-15}
H(u) = \frac14 \int |u_x|^2 - \frac14 \int |u|^4
\end{equation}
Then
 $$H'(u) \in T_u^*L^2  \underset{\text{metric}\; g}\simeq T_uL^2$$
The corresponding flow is $\partial_t u = JH'(u)$ yielding \eqref{E:NLS}.

Recalling that $\eta$ is given by \eqref{E:eta}, consider the manifold of solitons
$$ M = \{ \, \eta(\cdot, \mu,a,\theta,v) \, | \, \,\,\mu>0, \theta\in \mathbb{R}, a\in \mathbb{R}, v\in \mathbb{R} \, \} \,.$$
Computations show that the restriction of the symplectic form $\omega$ to $M$ is
$$i^*\omega = d\theta\wedge d\mu + dv\wedge da \,,$$
while the restriction of the Hamiltonian $H$ to $M$ is
$$H(\eta(\cdot, \mu,a,\theta,v)) = \frac12 \mu^{-1}v^2 - \frac16\mu^3 \,,$$
Note that the free single soliton flow \eqref{E:free-flow} is just the solution to the Hamilton equations of motion for $H(\eta)$ with respect to $i^*\omega$:
$$
\left\{
\begin{aligned}
&\dot \mu = \partial_\theta H(\eta) =0 \\
&\dot a = \partial_v H(\eta) = \mu^{-1}v\\
& \dot \theta = - \partial_\mu H(\eta) = \frac12 \mu^{-2}v^2 + \frac12 \mu^2 \\
&\dot v = -\partial_a H(\eta) =0
\end{aligned}
\right.
$$

Turning to the double soliton problem, recall that we model the $u$ in terms of $u_z$ given by \eqref{E:V-10} where $z=(z^1, \ldots, z^8)$ is given by \eqref{E:V-11}.  We introduce the shorthand notation
$$\eta_j \defeq \eta(\cdot, \mu_j, a_j, \theta_j, v_j) \,, \quad j=1,2.$$
Also recall that $h=e^{-a_0} \ll 1$, and the initial soliton separation is $2a_0= 2\log h^{-1} \gg 1$.  Expanding the nonlinearity, we obtain
\begin{equation}
\label{E:237}
H_p(u_z) = H_p(\eta_1) +\la H_p'(\eta_1),\eta_2 \ra + H_p(\eta_2) + \la H_p'(\eta_2),\eta_1\ra + O(h^4)
\end{equation}
The last two terms are dominant near $a_2$ (on the effective support of $\eta_2$), so that the second soliton sees an ``effective'' Hamiltonian 
\begin{equation}
\label{E:239}
H_{\text{eff}}(\mu_2,a_2,\theta_2,v_2) = H(\eta_2) + \la H_p'(\eta_2),\eta_1\ra
\end{equation}
and thus its expected equations of motion are
\begin{equation}
\label{E:V-1}
\left\{
\begin{aligned}
&\dot \mu_2 = \partial_{\theta_2}H(\eta_2) + \partial_{\theta_2}\la H_p'(\eta_2),\eta_1\ra \\
&\dot a_2 = \partial_{v_2} H(\eta_2) + \partial_{v_2} \la H_p'(\eta_2),\eta_1\ra\\
&\dot \theta_2 = -\partial_{\mu_2} H(\eta_2) - \partial_{\mu_2} \la H_p'(\eta_2),\eta_1\ra\\
&\dot v_2 = -\partial_{a_2} H(\eta_2) - \partial_{a_2}\la H_p'(\eta_2),\eta_1\ra
\end{aligned}
\right.
\end{equation}
Likewise, the first two terms in \eqref{E:237} are dominant near $a_1$ so the first soliton sees an effective Hamiltonian 
$$H_{\text{eff}}(\mu_1,a_1,\theta_1,v_1) = H(\eta_1) + \la H_p'(\eta_1),\eta_2\ra$$
and thus its expected equations of motion are
\begin{equation}
\label{E:V-2}
\left\{
\begin{aligned}
&\dot \mu_1 = \partial_{\theta_1}H(\eta_1) + \partial_{\theta_1}\la H_p'(\eta_1),\eta_2\ra \\
&\dot a_1 = \partial_{v_1} H(\eta_1) + \partial_{v_1} \la H_p'(\eta_1),\eta_2\ra\\
&\dot \theta_1 = -\partial_{\mu_1} H(\eta_1) - \partial_{\mu_1} \la H_p'(\eta_1),\eta_2\ra\\
&\dot v_1 = -\partial_{a_1} H(\eta_1) - \partial_{a_1}\la H_p'(\eta_1),\eta_2\ra
\end{aligned}
\right.
\end{equation}
Pulling \eqref{E:V-1} and \eqref{E:V-2} together gives us a systems of eight equations in eight unknowns:
\begin{equation}
\label{E:V-3}
\left\{
\begin{aligned}
&\dot \mu_1 = \partial_{\theta_1}H(\eta_1) + \partial_{\theta_1}\la H_p'(\eta_1),\eta_2\ra \\
&\dot a_1 = \partial_{v_1} H(\eta_1) + \partial_{v_1} \la H_p'(\eta_1),\eta_2\ra\\
&\dot \theta_1 = -\partial_{\mu_1} H(\eta_1) - \partial_{\mu_1} \la H_p'(\eta_1),\eta_2\ra\\
&\dot v_1 = -\partial_{a_1} H(\eta_1) - \partial_{a_1}\la H_p'(\eta_1),\eta_2\ra\\
&\dot \mu_2 = \partial_{\theta_2}H(\eta_2) + \partial_{\theta_2}\la H_p'(\eta_2),\eta_1\ra \\
&\dot a_2 = \partial_{v_2} H(\eta_2) + \partial_{v_2} \la H_p'(\eta_2),\eta_1\ra\\
&\dot \theta_2 = -\partial_{\mu_2} H(\eta_2) - \partial_{\mu_2} \la H_p'(\eta_2),\eta_1\ra\\
&\dot v_2 = -\partial_{a_2} H(\eta_2) - \partial_{a_2}\la H_p'(\eta_2),\eta_1\ra
\end{aligned}
\right.
\end{equation}
 After the even/odd symmetry assumption is imposed, one can distill from \eqref{E:V-3} the equations appearing in the statement of Theorem \ref{T:main1}.
\footnote{In fact, the above heuristic argument does not invoke the even/odd symmetry assumption and thus we might expect the equations \eqref{E:V-3} even without this assumption.  However, the equations \eqref{E:V-3} are only expected to be accurate to order $O(h^4)$.  In the presence of the symmetry assumption the eight equations in \eqref{E:V-3} dramatically decouple as \eqref{E:aveqn}, \eqref{E:V-6}, \eqref{E:V-7} which permits a direct analysis of these ODEs that shows that an $O(h^4)$ unknown can only only have a limited $O(h^2)$ effect the solution.  In the general case, the eight equations in \eqref{E:V-3} are more interdependent and we are not certain as to the effect of $O(h^4)$ perturbations.  This is not the only obstacle to removing the symmetry assumption; see comments below.}

We find that the above argument yielding \eqref{E:V-3} is a little too vague to adapt to a rigorous proof.  We now consider a different perspective that informally produces the same set of equations \eqref{E:V-3} but adapts to yield a proof of Theorem \ref{T:main1} and in fact extends and unifies the results of \cite{FGJS, HPZ, HZ1, HZ2}.  Recalling $z$ defined in \eqref{E:V-11}, consider now the eight-dimensional two-soliton manifold 
$$M = \{ \, u_z=\eta_1+\eta_2 = \eta(\cdot, \mu_1,a_1,\theta_1,v_1)+\eta(\cdot, \mu_2,a_2,\theta_2,v_2) \,  \}$$
The symplectic form \eqref{E:V-16} restricted to $M$ is
\begin{equation}
\label{E:V-17}
i^*\omega = \frac12 \sum_{\ell,m=1}^8 a_{\ell m}(z) \, dz^\ell \wedge dz^m
\end{equation}
where 
$$A(z) = (a_{\ell m}(z)) \,, \qquad a_{\ell m}(z) = \la \partial_{z^\ell}u_z, J^{-1} \partial_{z^m} u_z \ra$$
Let $H(u_z)$ denote the restriction to $M$ of the Hamiltonian \eqref{E:V-15}.  The expected equations of motion for $z^m$ are Hamilton's equations for $H(u_z)$ with respect to $i^*\omega$.  These equations are:
\begin{equation}
\label{E:V-20}
\dot z^m = -\sum_{\ell=1}^8 \partial_{z^\ell} H(u_z) \, a^{\ell m}(z) \,, \quad m=1,\ldots, 8
\end{equation}
where $a^{\ell m}$ denotes the components of the inverse of the matrix $A=(a_{\ell m})$.  

The matrix $A$ contains $O(h^2)$ terms that result from the pairing of directions parallel to the first soliton with directions parallel to the second soliton.  Moreover, $H(u_z)$ contains additional $O(h^2)$ terms arising from the quadratic part of \eqref{E:V-15} not represented in \eqref{E:V-3}.  It turns out that $O(h^2)$ terms in $a^{\ell m}$  and $O(h^2)$ terms in $H(u_z)$ each give rise to terms which cancel in \eqref{E:V-20}.  This hinges upon the fact that
\begin{equation}
\label{E:422}
\partial_{z^\ell} H(u_z) = -\sum_{j=1}^8 b^j a_{j\ell} + \partial_{z^\ell}\la \eta_1, H_p'(\eta_2)\ra + \partial_{z^\ell} \la \eta_2, H_p'(\eta_1)\ra 
\end{equation}
where
$$
b_2 = \partial_{v_1}H(\eta_1)\,, \quad 
b_4 = \partial_{v_2}H(\eta_2) \,, \quad 
b_5 = -\partial_{\mu_1}H(\eta_1) \,, \quad 
b_7 = -\partial_{\mu_2}H(\eta_2)$$
and all other $b_j=0$.  When this equation is substituted into \eqref{E:V-20}, once can witness the simplification arising from the pairing of $A$ and $A^{-1}$, and this shows that \eqref{E:V-20} is equivalent to \eqref{E:V-3}.  We elaborate upon this in Appendix A.

The merit in this point of view is that the equations \eqref{E:V-20} readily follow from the symplectic decomposition of the flow--that is, we select $z$ (via the implicit function theorem) so that 
\begin{equation}
\label{E:238}
u = u_z+w
\end{equation}
where $w\in T_zM^\perp$ (the \emph{symplectic} orthogonal complement to $T_zM$ in $T_{u_z}L^2$).  In \S \ref{S:eff-dyn} (Lemma \ref{L:ODEs}) a general argument is given showing that the equations \eqref{E:V-20} follow, with errors of size $h^4+\|w\|_{H^1}^2$.  This argument exploits the fact that \eqref{E:V-20}, with errors of size $h^4+\|w\|_{H^1}^2$, is equivalent to 
\begin{equation}
\label{E:intro10}
\partial_t u_z = \Pi_z JH'(u_z) + O(h^4+\|w\|_{H^1}^2)
\end{equation}
where $\Pi_z:T_{u_z}L^2 \to T_zM$ is the symplectic orthogonal projection operator given explicitly by
$$
\Pi_z f = \sum_{\ell, m=1}^8  \la f, J^{-1}\partial_{z^\ell} u_z \ra a^{\ell m}(z) \partial_{z^m}u_z
$$
The proof of Lemma \ref{L:ODEs} makes no reference to the specific meaning of $H$ or $u_z$, and a similar result with nearly identical proof would yield the equations of motion in many other problems, including those studied in \cite{HPZ, HZ1, HZ2}.  The fact that the equations of motion follow automatically but rigorously from the symplectic decomposition \eqref{E:238} is one of the main advantages of this geometric approach to our problem, as opposed to a more \emph{ad hoc} approach based on the discussion surrounding \eqref{E:239}.  \footnote{The idea that the equations of motions should be Hamilton's equations for the restricted Hamiltonian with respect to the restricted symplectic form was introduced in \cite{HZ1, HZ2} and supported informally with an argument involving Darboux's theorem.  The equations of motion thus obtained were used as a guide in the analysis in \cite{HPZ, HZ1, HZ2} but the general rigorous connection between the symplectic decomposition of the flow and the equations of motion, as obtained in our Lemma \ref{L:ODEs}, was not obtained in \cite{HPZ, HZ1, HZ2}.}

It then remains to show that $\|w(t)\|_{H_x^1} \lesssim h^2$ on the time scale $O(h^{-1})$, which we would like to prove using a suitable adaptation of the Lyapunov functional method initiated into the theory of orbital stability of single solitons by Weinstein \cite{W}.  Unfortunately, the presence of the $\Pi_z$ projection in \eqref{E:intro10} corrupts this computation and only yields a bound $\|w(t)\|_{H_x^1} \lesssim h$.  To eliminate this problem, we construct a function $\nu_z=O(h^2)$, whose only time dependence is through the parameter $z$, such that the distorted double-soliton function $\tilde u_z = u_z + \nu_z$ satisfies
\begin{equation}
\label{E:intro11}
\partial_t \tilde u_z = JH'(\tilde u_z) + O(h^4+\|w\|_{H^1}^2)
\end{equation}
which is just \eqref{E:intro10} without the $\Pi_z$ projection.  The construction of $\nu_z$ is carried out in \S \ref{S:approx-sol}.

We add this correction $\tilde v_z$ to our soliton manifold $M$ and consider the distorted manifold $\tilde M = \{ \, \tilde u_z \, \}$.  The solution $u$ to \eqref{E:NLS} now has a decomposition $u=\tilde u_z + \tilde w$ where $\tilde u_z$ satisfies \eqref{E:intro11} and it suffices to prove that $\|\tilde w(t) \|_{H_x^1} \lesssim h^2$.  In other words, we would like to show that the exact solution to \eqref{E:NLS} is approximately equal to the solution to the approximate equation \eqref{E:intro11}.  In \S \ref{S:Lyapunov}, a Lyapunov functional is employed to obtain the needed control on $\tilde w$.  The Lyapunov functional used is a superposition of two copies--one for each soliton--of the classical functional, built from energy, momentum and mass, employed by Weinstein \cite{W} to prove orbital stability of single solitons.  This superposition was previously used by Martel-Merle-Tsai \cite{MMT} in their study of the orbital stability of spreading multiple solitons.  Our presentation of this component of the argument is a little different from \cite{W} or \cite{MMT} and more in line with the abstract orbital stability theory developed by Grillakis-Shatah-Strauss \cite{GSS1, GSS2}.  Roughly, we prove that if $W:L^2\to \mathbb{R}$ is a (densely defined) functional such that the derivative is of order $O(h)$ on $\tilde M$, and if we define $L$ to be the quadratic part of $W$ above $\tilde M$, then $\partial_t L$ is essentially the quadratic part of the Poisson bracket $\{ H, W\}(u)$ above $\tilde M$, which we show is of order $O(h^5)$.  

Let us note that $h^{-\delta}$ losses occur in several estimates, which were not necessarily indicated in the above introduction, owing to the fact that in the attractive case $|v|$ can exceed $h$ by a factor of $\log h^{-1}$ and $a$ decreases below $a_0$, as well as the presence of an $x$-multiplication factor in terms involving $\partial_{v_j} u_z$ in both the attractive and repulsive cases.  We indicate the presence of such losses by writing, for example, $h^{4-}$.  These losses are more carefully quantified in the concluding summary of the proof in \S \ref{S:conclusion}.

We emphasize that the methods in \S \ref{S:eff-dyn} and \S \ref{S:Lyapunov}, although stated only for the problem at hand, are fairly general and widely applicable to problems in orbital stability of single \cite{W, GSS1, GSS2} and multiple \cite{MS} solitons and the dynamics of solitons in slowly varying potentials \cite{FGJS, HPZ, HZ2, DV, Munoz}, weak rough potentials \cite{AS, HZ1, P}, and the interaction of two soliton tails, as considered here.  
The portion of the analysis most specific to the problem at hand appears in \S \ref{S:approx-sol}, where the approximate solution is constructed. In this section, we consider the two components of the double-soliton separately and exploit the group structure of each individual soliton to pull-back to a nearly-stationary problem, which can be solved by operator inversion.  This method was introduced by Holmer-Zworski \cite{HZ2} to produce an improvement of the result by Fr\"ohlich-Gustafson-Jonsson-Sigal \cite{FGJS} on the dynamics of single solitons in a slowly-varying potential, eliminating the uncontrollable errors in the ODEs appearing in \cite{FGJS}.

Let us point out some related papers.  Marzuola-Weinstein \cite{MW} consider the dynamics of symmetric and antisymmetric states in a double well-potential.  Krieger-Martel-Rapha\"el \cite{KMR} construct two-soliton solutions with separating components asymptotically as $t\to +\infty$ for the nonlinear Hartree equation, where the long-range effects of the nonlinearity complicate the analysis but also lead to nonnegligible perturbations of the asymptotic trajectories.  Our analysis is similar in several ways to that of \cite{KMR}, although our priorties are different -- we study the dynamics for a finite (but dynamically significant) time of an initial data that is close to a double-soliton, wheras they provide infinite time dynamics for an exact double-soliton solution.  The problem of stability of nonintegrable NLS multiple solitons, with components that separate as $t\to \infty$, has been considered by Perelman \cite{Per}, Rodnianski-Soffer-Schlag \cite{RSS}, and Martel-Merle-Tsai \cite{MMT}.

We now remark on where we rely upon the even/odd symmetry assumption on the solution.  While the arguments in \S \ref{S:eff-dyn} yielding \eqref{E:V-20} apply in general, in \S \ref{S:approx-sol}, when constructing the solution $\tilde u_z$ to the approximate equation \eqref{E:intro11}, we do make use of the symmetry assumption, although we have sketched an argument (not included in this paper) showing how one can adapt the argument to the general case.  The symmetry assumption also greatly simplifies the computations carried out in Appendix A which ultimately yield the ODEs \eqref{E:aveqn}, \eqref{E:V-6}, \eqref{E:V-7} in Theorem \ref{T:main1}.  The integrals in the general case appear very complicated, and we are less confident that we could control the propagation of $O(h^4)$ errors, as previously remarked.  However, the one place where the symmetry assumption is used critically is in obtaining the \emph{upper} bound on the Lyapunov function used in \S \ref{S:Lyapunov} to show the closeness of the true solution $u$ and the solution $\tilde u_z$ of the approximate equation \eqref{E:intro11}.  Our guess is that to resolve this issue, one would need to restructure the Martel-Merle-Tsai Lyapunov function in a substantial way.  The lower bound on the Lyapunov function, however, carries through in general.

\subsection{Acknowledgements}
We thank Maciej Zworski and Galina Perelman for helpful discussion related to this paper.  J.H. was partially supported by NSF Grant DMS-0901582 and a Sloan Research Fellowship.

\section{Background on solitons, Hamiltonian structure, and Lyapunov functionals}
\label{S:background}

The NLS equation \eqref{E:NLS} can be put into Hamiltonian form as follows. 
Take as the ambient symplectic manifold $L^2=L^2(\mathbb{R};\mathbb{C})$ with metric  $\la v_1,v_2\ra_u = \Re \int v_1 \bar v_2$ for $u\in L^2$, $v_1,v_2\in T_uL^2=L^2$.  Let $J=-i$, viewed as an operator $T_uL^2\to T_uL^2$.  The corresponding symplectic form is $\omega_u(v_1,v_2) = \la v_1, J^{-1}v_2\ra_u$ (we henceforth drop the $u$-subscript).  Define the (densely defined, with domain $D=H^2$) Hamiltonian $H:L^2\to \mathbb{R}$ as
$$H(u) = \frac14 \int |u_x|^2 dx - \frac14 \int |u|^4 \,.$$
Using the metric $\la \cdot, \cdot \ra$ defined above, $H'(u)\in T^*_u L^2$ is identified with an element of $T_uL^2$.
The free NLS equation \eqref{E:NLS} is 
\begin{equation}
\label{E:NLS-Hform}
\partial_t u = JH'(u)
\end{equation}
Solutions to \eqref{E:NLS} also satisfy conservation of mass $M(u)$ and momentum $P(u)$, where
$$M(u) = \frac12 \int |u|^2 \,, \qquad P(u) = \frac12 \Im \int \bar u \, u_x \,.$$
Let $\phi(x) = \sech x$ and
$$\eta(x,\mu,a,\theta,v) = e^{i\theta}e^{i\mu^{-1}v(x-a)} \mu \phi(\mu (x-a))$$
Direct computation shows that $M(\eta) = \mu$ and $P(\eta)=v$.
Consider the manifold of solitons
$$M = \{ \, \eta(\cdot, \mu,a,\theta,v) \, | \, \,\,\mu>0, \theta\in \mathbb{R}, a\in \mathbb{R}, v\in \mathbb{R} \, \} \,.$$

The tangent space at $\eta=\eta(\cdot, \mu,a,\theta,v)$ is
$$T_{(\mu,a,\theta,v)} M = \spn \{ \, \partial_\mu \eta, \partial_\theta \eta, \partial_a \eta, \partial_v \eta \, \}\,.$$

Note that $JH'(\eta) \in T_{(\mu,a,\theta,v)} M$, and thus the flow associated to \eqref{E:NLS} will remain on $M$ if it is initially on $M$.  Specifically, direct computation shows
\begin{equation}
\label{E:JH0-prime}
JH'(\eta) = (\frac12 \mu^{-2} v^2 + \frac12 \mu^2) \partial_\theta \eta + \mu^{-1}v \partial_a \eta \,.
\end{equation}
To gain a better understanding of \eqref{E:free-flow} and \eqref{E:JH0-prime}, we can restrict $\omega$ to $M$ to obtain
$$i^*\omega = d\theta\wedge d\mu + dv\wedge da \,,$$
where $i:M\to L^2$ denotes the inclusion
and restrict $H$ to $M$ to obtain
$$H(\eta) = \frac12 \mu^{-1}v^2 - \frac16\mu^3 \,,$$
and then note that \eqref{E:free-flow} is just the solution to the Hamilton equations of motion for $H(\eta)$ with respect to $i^*\omega$:
\begin{equation}
\label{E:free-ODEs}
\left\{
\begin{aligned}
&\dot \mu = \partial_\theta H(\eta) =0 \\
&\dot a = \partial_v H(\eta) = \mu^{-1}v\\
& \dot \theta = - \partial_\mu H(\eta) = \frac12 \mu^{-2}v^2 + \tfrac12 \mu^2 \\
&\dot v = -\partial_a H(\eta) =0
\end{aligned}
\right.
\end{equation}
Suppose we knew that $JH'(\eta)\in T_{(\mu,a,\theta,v)}M$ and wanted to recover the coefficients as in \eqref{E:JH0-prime}. This could be achieved by noting that
\begin{align*}
JH'(\eta) &= \partial_t \eta \\
&= \dot \mu \partial_\mu \eta +\dot a \partial_a\eta + \dot \theta \partial_\theta \eta + \dot v \partial_v \eta \\
&= \partial_v H(\eta) \partial_a \eta - \partial_\mu H(\eta) \partial_\theta \eta
\end{align*}
Moreover, the functionals $M$ and $P$, considered as \emph{auxiliary} Hamiltonians, have associated Hamilton vector fields
$$JM'(\eta) = -\partial_\theta \eta \qquad JP'(\eta) = \partial_a \eta \,.$$
This enables us to write
\begin{equation}
\label{E:JH0-prime-2}
JH'(\eta) = \partial_v H(\eta) \, JP'(\eta) + \partial_\mu H(\eta) JM'(\eta) \,.
\end{equation}
From this, we learn that $W_{(\mu,a,\theta,v)}'(\eta) =0$, where
\begin{equation}
\label{E:W-classical}
W_{(\mu,a,\theta,v)}(u) \defeq -\partial_\mu H(\eta) M(u) - \partial_v H(\eta) P(u) + H(u) \,.
\end{equation}
The functional $L_{(\mu,a,\theta,v)}(u)=W_{(\mu,a,\theta,v)}(u)-W_{(\mu,a,\theta,v)}(\eta)$ is the Lyapunov functional used in the classical orbital stability theory for \eqref{E:NLS} due to Weinstein \cite{W}.

\section{Effective dynamics}
\label{S:eff-dyn}

Now we turn to the double soliton problem and begin the proof of Theorem \ref{T:main1}.  
Consider the two-soliton submanifold $M$ of $L^2$ given by 
$$M = \{ u_z \defeq \eta(\cdot, \mu_1, a_1, \theta_1, v_1) + \eta(\cdot, \mu_2, a_2, \theta_2, v_2) \, \} \,.$$
Note that $M$ is just the linear superposition of two single solitons.  We adopt the notation 
$$z=(z^1,z^2,z^3,z^4,z^5,z^6,z^7,z^8)= (\mu_1, a_1, \mu_2, a_2, \theta_1, v_1, \theta_2, v_2) \,,$$
for coordinates on this manifold $M$.    
Next, we give the form of the symplectic orthogonal projection operator 
$$\Pi_z: T_{u_z}L^2 \to T_zM\,,$$  
Note that $T_{u_z}L^2$ is naturally identified with $L^2$.  A consequence of the requirement that $\la f- \Pi_z f, J^{-1}\partial_{z^\ell} u_z\ra =0$, $\ell=1,\ldots, 8$ is that 
\begin{equation}
\label{E:proj1}
\Pi_z f = \sum_{\ell, m=1}^8  \la f, J^{-1}\partial_{z^\ell} u_z \ra a^{\ell m}(z) \partial_{z^m}u_z
\end{equation}
where 
$A(z) = (a_{\ell m}(z))$ is the $8\times 8$ matrix with components $a_{\ell m}(z) = \la \partial_{z^\ell} u_z, J^{-1}\partial_{z^m} u_z \ra$ and $A(z)^{-1}= (a^{\ell m}(z))$ is the inverse matrix.  

Let $i:M\to L^2$ denote the inclusion.  It follows from the definition of $A(z)$ that the restricted symplectic form $i^*(\omega)$ takes the form
\begin{equation}
\label{E:restricted-form}
i^*(\omega) = \frac12 \sum_{\ell,m=1}^8 a_{\ell m} dz^\ell \wedge dz^m \,.
\end{equation}
It also follows by substitution into \eqref{E:proj1} that
$$\Pi_zJH'(u_z) = -\sum_{\ell,m=1}^8 \partial_{z^\ell}H(u_z) \,  a^{\ell m}(z) \, \partial_{z^m} u_z$$
Consequently, the equation $\partial_t u_z = \Pi_zJH'(u_z)$ is equivalent to the system of equations 
$$\dot z^m = -\sum_{\ell=1}^8 \partial_{z^\ell} H(u_z) \, a^{\ell m}(z) \, \qquad m=1,\ldots, 8 \,,$$
which are precisely Hamiltonian's equations of motion for the restricted (to $M$) Hamiltonian $z\mapsto H(u_z)$ with respect to the restricted (to $M$) symplectic form $i^*(\omega)$.

We propose to model the solution $u$ to \eqref{E:NLS-Hform} by
\begin{equation}
\label{E:decomp}
u = u_z + w
\end{equation}
where $u_z\in M$ is chosen so that the symplectic orthogonality conditions
\begin{equation}
\label{E:orth}
\la w, J^{-1}\partial_{z^\ell} u_z\ra =0 \,, \qquad \ell=1,\ldots, 8.
\end{equation}
hold.  The fact that such a $z$ exists follows from the implicit function theorem and the assumed smallness of $w$.  Note that if we assume $u(t)$ solves \eqref{E:NLS-Hform}, this induces time dependence on the parameters $z \in \mathbb{R}^8$. \footnote{Note that here $w$ is properly understood as an element of $T_{u_z}L^2$ and in \eqref{E:decomp} we mean that, starting at $u_z$ we take the flow-forward (by ``time'' 1) in the direction $w$.  However, using the natural identification between $T_{u_z}L^2$ and $L^2$, \eqref{E:decomp} makes sense as an equation involving functions in $L^2$.}

\begin{lemma}[effective dynamics]
\label{L:ODEs}
Suppose that $u$ evolves according to \eqref{E:NLS-Hform} and $z$, $w$ are defined by \eqref{E:decomp} so that the orthogonality conditions \eqref{E:orth} hold.  Then 
\begin{equation}
\label{E:eff-dyn}
\| \partial_t u_z - \Pi_z JH'(u_z) \|_{T_zM} \lesssim  \|w\|_{H^1}^2 + \max_{1\leq n \leq 8} \|J^{-1}\partial_{z^n}\Pi_z^\perp JH'(u_z)\|_{H^1}^2\,.
\end{equation}
Equivalently, considering $M$ as an 8-dimensional symplectic manifold equipped with the symplectic form  $i^*(\omega)$ given in \eqref{E:restricted-form}, the Hamilton's equations of motion for $z$ induced by the restricted Hamiltonian $z\mapsto H(u_z)$ approximately hold as follows:
\begin{equation}
\label{E:eff-dyn2}
\left|\dot z^m +\sum_{\ell=1}^8 \partial_{z^\ell} H(u_z) \, a^{\ell m}(z)\right| \lesssim \|w\|_{H^1}^2+\max_{1\leq n \leq 8} \|J^{-1}\partial_{z^n}\Pi_z^\perp JH'(u_z)\|_{H^1}^2
\, \qquad m=1,\ldots, 8
\end{equation}
\end{lemma}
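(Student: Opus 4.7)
The plan is to differentiate the symplectic orthogonality conditions \eqref{E:orth} in $t$ and combine them with $\partial_t u = JH'(u)$ after Taylor-expanding $JH'(u_z+w) = JH'(u_z) + JH''(u_z)w + N(w)$, where $\|N(w)\|_{H^1}\lesssim \|w\|_{H^1}^2$.  Differentiating \eqref{E:orth} yields $\la \partial_t w, J^{-1}\partial_{z^\ell}u_z\ra = -\dot z^n \la w, J^{-1}\partial_{z^n}\partial_{z^\ell}u_z\ra$, while substituting $\partial_t w = JH'(u_z+w) - \partial_t u_z$ and pairing against $J^{-1}\partial_{z^\ell}u_z$---using $\la JH'(u_z), J^{-1}\partial_{z^\ell}u_z\ra = -\partial_{z^\ell}H(u_z)$ (antisymmetry of $J$) and $\partial_t u_z = \dot z^m\partial_{z^m}u_z$---gives a second expression for the same quantity.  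Equating the two yields
$$\dot z^m\tilde a_{m\ell} = -\partial_{z^\ell}H(u_z) + \la JH''(u_z)w, J^{-1}\partial_{z^\ell}u_z\ra + O(\|w\|_{H^1}^2),$$
where $\tilde a_{m\ell} \defeq a_{m\ell} - \la w, J^{-1}\partial_{z^m}\partial_{z^\ell}u_z\ra$ is an $O(\|w\|_{H^1})$ perturbation of $a_{m\ell}$.

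The main obstacle is the middle term on the right, which is naively only of size $O(\|w\|_{H^1})$; naive Cauchy-Schwarz here would lose a whole power of $\|w\|$.  To upgrade it, I use antisymmetry of $J$, self-adjointness of $H''(u_z)$, and the chain-rule identity $H''(u_z)\partial_{z^\ell}u_z = J^{-1}\partial_{z^\ell}(JH'(u_z))$ to rewrite
$$\la JH''(u_z)w, J^{-1}\partial_{z^\ell}u_z\ra = -\la w, J^{-1}\partial_{z^\ell}(JH'(u_z))\ra,$$
then split $JH'(u_z) = \Pi_zJH'(u_z) + \Pi_z^\perp JH'(u_z)$ and expand $\Pi_zJH'(u_z) = \sum_m F^m(z)\partial_{z^m}u_z$ with $F^m = -\sum_\ell \partial_{z^\ell}H(u_z)a^{\ell m}$ (the ``ideal'' Hamilton velocity).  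The product rule for $\partial_{z^\ell}(\Pi_zJH'(u_z))$ produces a term proportional to $\partial_{z^m}u_z$ that vanishes against $\la w, J^{-1}\cdot\ra$ by \eqref{E:orth}, plus a term $-\sum_m F^m\la w, J^{-1}\partial_{z^\ell}\partial_{z^m}u_z\ra$.

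Setting $\epsilon^m \defeq \dot z^m - F^m$---which is exactly the coordinate representation of $\partial_t u_z - \Pi_zJH'(u_z)$ in the basis $\{\partial_{z^m}u_z\}$---the term above combines with the $-\dot z^m\la w, J^{-1}\partial_{z^m}\partial_{z^\ell}u_z\ra$ already absorbed into $\dot z^m\tilde a_{m\ell}$; the two cancel up to $\epsilon^m\la w, J^{-1}\partial_{z^m}\partial_{z^\ell}u_z\ra$, exactly restoring $\tilde a_{m\ell}$ on the left and giving
$$\epsilon^m\tilde a_{m\ell} = -\la w, J^{-1}\partial_{z^\ell}\Pi_z^\perp JH'(u_z)\ra + O(\|w\|_{H^1}^2).$$
Cauchy-Schwarz bounds the first term by $\|w\|_{H^1}\cdot\|J^{-1}\partial_{z^\ell}\Pi_z^\perp JH'(u_z)\|_{H^1}$, and AM-GM converts the product into the sum appearing in \eqref{E:eff-dyn2}.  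Inverting $\tilde a$---a small perturbation of the matrix $A(z)$, which is boundedly invertible since for widely separated solitons $A(z)$ is a bounded perturbation of a block-diagonal matrix with $O(1)$ blocks---delivers the pointwise bound on $\epsilon^m$ asserted in \eqref{E:eff-dyn2}.  The norm form \eqref{E:eff-dyn} then follows from $\|\partial_t u_z - \Pi_zJH'(u_z)\|_{T_zM}^2 = \sum_{m,m'}\epsilon^m\epsilon^{m'}\la\partial_{z^m}u_z,\partial_{z^{m'}}u_z\ra$.
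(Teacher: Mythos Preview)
Your argument is correct and is essentially the same as the paper's proof: both differentiate the orthogonality conditions, use self-adjointness of $H''(u_z)$ and $J^*J^{-1}=-1$ to rewrite $\la JH''(u_z)w, J^{-1}\partial_{z^\ell}u_z\ra = -\la w, J^{-1}\partial_{z^\ell}(JH'(u_z))\ra$, split off the $\Pi_z^\perp$ piece, and use the orthogonality \eqref{E:orth} to kill the $(\partial_{z^\ell}F^m)\partial_{z^m}u_z$ contribution. The only cosmetic difference is that you absorb the remaining term $\epsilon^m\la w, J^{-1}\partial_{z^m}\partial_{z^\ell}u_z\ra$ into a perturbed coefficient matrix $\tilde a_{m\ell}$ which you then invert, whereas the paper leaves it on the right as $\la w, J^{-1}\partial_{z^n}R\ra$, bounds it by $\|w\|_{H^1}\|R\|_{T_zM}$, and absorbs it into the left-hand side using smallness of $\|w\|_{H^1}$---these two maneuvers are equivalent.
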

The norm $\|\cdot \|_{T_zM}$ is the one induced by the metric $\la \cdot, \cdot \ra_{u_z}$.  As $T_zM$ is finite-dimensional, we have the norm-equivalence to
$$\left\| \sum_{\ell=1}^8 \gamma(z^\ell) \partial_{z^\ell}u_z \right\|_{T_zM} \sim  \max_{1\leq \ell \leq 8} |\gamma(z^\ell)|$$
\begin{proof}
Since $u$ solves \eqref{E:NLS-Hform}, we obtain from \eqref{E:decomp} the equation for $w$:
\begin{equation}
\label{E:w-eqn-1}
\partial_t w = -(\partial_tu_z - \Pi_zJH'(u_z)) + \Pi_z^\perp JH'(u_z) + JH''(u_z)w + O_{H^1}(\|w\|_{H^1}^2)
\end{equation}
By applying $\partial_t$ to \eqref{E:orth}, we obtain
$$0 = \la \partial_t w , J^{-1}\partial_{z^n} u_z \ra + \la w, J^{-1}\partial_{z^n}\partial_t u_z \ra$$
Here we have used that $\partial_t \partial_{z^n} = \partial_{z^n} \partial_t$, which holds provided we adopt the convention that $\partial_{z^\ell} \dot z^m =0$ for all $1\leq \ell, m \leq 8$.  Substituting \eqref{E:w-eqn-1} and using that $\la \Pi_z^\perp JH'(u_z), J^{-1} \partial_{z^n} u_z \ra =0$, we obtain
\begin{equation}
\label{E:20}
0 = \text{A}+\text{B}+\text{C}+\text{D}
\end{equation}
where
\begin{align*}
&\text{A} = -\la \partial_t u_z - \Pi_z JH'(u_z), J^{-1}\partial_{z^n} u_z \ra \\
&\text{B} =  \la JH''(u_z)w, J^{-1}\partial_{z^n} u_z \ra \\
&\text{C} =  \la w, J^{-1} \partial_{z^n} \partial_t u_z \ra \\
&\text{D} =  \la O(w^2), J^{-1}\partial_{z^n} u_z \ra
\end{align*}
Since $J^*J^{-1}=-1$ and $H''(u_z)$ is self-adjoint,
$$\text{B} = -\la w, H''(u_z)\partial_{z^n} u_z \ra = -\la w, \partial_{z^n} H'(u_z) \ra = -\la w, J^{-1}\partial_{z^n} JH'(u_z) \ra$$
Hence
\begin{align*}
\text{B}+\text{C} &= \la w, J^{-1}\partial_{z^n} (\partial_t u_z -JH'(u_z)) \ra \\
&= \la w, J^{-1}\partial_{z^n} (\partial_t u_z -\Pi_zJH'(u_z)) \ra -\la w, J^{-1}\partial_{z^n} \Pi_z^\perp JH'(u_z) \ra 
\end{align*}
Let $R= \partial_t u_z - \Pi_z JH'(u_z) \in T_zM$, and expand with respect to the basis of $T_zM$ as
$$R= \sum_{\ell=1}^8 \gamma_\ell(z) \partial_{z^\ell} u_z \,.$$
It follows from \eqref{E:20} that
\begin{equation}
\label{E:21}
\la R, J^{-1} \partial_{z^n} u_z \ra = \la w, J^{-1}\partial_{z^n} R \ra - \la w, J^{-1}\partial_{z^n} \Pi_z^\perp JH'(u_z)\ra + O(\|w\|_{H^1}^2) \,.
\end{equation}
We have
$$\partial_{z^n} R = \sum_{\ell=1}^8 \partial_{z^n}\gamma_\ell(z) \, \partial_{z^\ell} u_z + \sum_{\ell=1}^8 \gamma_\ell(z) \partial_{z^n}\partial_{z^\ell} u_z \,.$$
Since $w\in T_zM^\perp$,
$$\la w, J^{-1}\partial_{z^n} R \ra = \sum_{\ell=1}^8 \gamma_\ell(z) \la w, J^{-1}\partial_{z^n}\partial_{z^\ell} u_z \ra$$
and hence
\begin{equation}
\label{E:22}
| \la w, J^{-1}\partial_{z^n} R \ra| \lesssim \|w\|_{H^1} \| R \|_{T_zM} \,.
\end{equation}
The lemma follows from \eqref{E:21}, $\|R\|_{T_zM} = \max_{1\leq n\leq 8} |\la R, J^{-1} \partial_{z^n} u_z\ra|$, \eqref{E:22}, and Cauchy-Schwarz.
\end{proof}

In our case we shall have 
$$\|J^{-1}\partial_{z^n} \Pi_z^\perp JH'(u_z)\|_{H^1}^2\lesssim h^{4-} \,.$$
We carry out computations of \eqref{E:eff-dyn} in Appendix \ref{A:computations} and show that \eqref{E:eff-dyn} is equivalent to \eqref{E:V-3}, with error terms $O(h^{4-})$, even without the even/odd assumption on the solution.  It is further shown in Appendix \ref{A:computations} that when the even/odd assumption is imposed and the integrals in \eqref{E:V-3} are explicitly computed, we obtain
$$
\left\{
\begin{aligned}
&\dot \mu = (-1)^\sigma (8a-4)ve^{-2a} + O(h^{4-})\\
&\dot a = \mu^{-1}v + (-1)^\sigma (-4a+\frac23\pi^2)ve^{-2a}+O(h^{4-})\\
&\dot \theta = \frac12 \mu^2 + \frac12 v^2 \mu^{-2} + 18(-1)^\sigma e^{-2a}+O(h^{4-})\\
&\dot v=-4(-1)^\sigma e^{-2a} + O(h^{4-})
\end{aligned}
\right.
$$
The solution $(\mu,a,\theta,v)$ is adequately approximated by the ODEs appearing in the statement of Theorem \ref{T:main1}.

\section{Approximate solution}
\label{S:approx-sol}

By Lemma \ref{L:ODEs} and \eqref{E:w-eqn-1}, the equation for $w$ is
$$
\left\{
\begin{aligned}
&\partial_t w = JH''(u_z)w + \Pi_z^\perp JH'(u_z) + O_{H^1}(\|w\|_{H^1}^2+h^{4-}) \\
&w\big|_{t=0}=w_0
\end{aligned}
\right.
$$

The next step is to show that there exists a function $\nu_z(x)$ such that $\|\nu_z\|_{H^1} \lesssim h^2$, whose only time dependence occurs through the parameter $z$, such that
\begin{equation}
\label{E:approx-sol}
\partial_t \nu_z = JH''(u_z)\nu_z + \Pi_z^\perp JH'(u_z)+ O_{H^1}(h^{4-}+\|w\|_{H^1}^2)
\end{equation}
 Here it is assumed that $z\in \mathbb{R}^8$ evolves according to Lemma \ref{L:ODEs}, i.e. 
\begin{equation}
\label{E:ODEs-3}
\partial_t u_z = \Pi_zJH'(u_z) + O_{H^1}(h^{4-}+\|w\|_{H^1}^2)
\end{equation}
The initial data $\nu_z\big|_{t=0}$ is \emph{not} prescribed but our structural assumption on $\nu_z$ is fairly rigid.  Note that given \eqref{E:ODEs-3}, the assertion that $\nu_z$ solve \eqref{E:approx-sol} is equivalent to the statement that $\tilde u_z \defeq u_z+\nu_z$ solve
\begin{equation}
\label{E:approx-eqn}
\partial_t \tilde u_z = JH'(\tilde u_z)+O_{H^1}(h^{4-}+\|w\|_{H^1}^2) \,.
\end{equation}
This is an approximate solution to \eqref{E:NLS-Hform} (that does not, in general, satisfy the specified initial data).  

Let $g:L^2\to L^2$ be the operator attached to the parameters $(\mu,a,\theta,v)$ that acts on a function $\rho$ as follows:
\begin{equation}
\label{E:corr240}
(g \rho) (x) = e^{i\theta}e^{i\mu^{-1}v(x-a)}\mu \rho(\mu(x-a)) \,. 
\end{equation}
The inverse action is
$$g^{-1}\rho(x) = e^{-i\theta}e^{-i\mu^{-2}vx} \mu^{-1} \rho(\mu^{-1}x+a) \,.$$
The adjoint action $g^*$ with respect to $\la \cdot, \cdot\ra$ is
$$g^* \rho(x) = e^{-i\theta} e^{-i\mu^{-2}vx} \rho( \mu^{-1}x+a) = \mu g^{-1}\rho (x) \,.$$
Denote $\phi(x)=\sech x$.  Then $u_z = g_1\phi+g_2\phi$.  We look for a solution $\nu_z$ to \eqref{E:approx-sol} in the form
\begin{equation}
\label{E:C2}
\nu_z= \sum_{j=1}^2 \alpha_j \; g_j\rho_j \,,
\end{equation}
where $\alpha_j=\alpha_j(\mu_1,a_1,\theta_1,v_1,\mu_2,a_2,\theta_2,v_2)$ and  $g_j$ is the operator corresponding to $(\mu_j,a_j,\theta_j,v_j)$.   That is, we assume $\nu_z$ can be decomposed into two pieces, each of which can be pulled back to a stationary equation and solved by operator inversion.  The time dependence of $\nu_z$ occurs only through $z$.

The next step is to substitute \eqref{E:C2} into \eqref{E:approx-sol}.  The resulting equation simplifies provided we assume that each $\rho_j$ satisfies $|\rho_j(x)| \lesssim h^2 e^{(-1+)|x|}$ for $|x|\geq 1$ as certain cross terms become $O_{H^1}(h^{4-})$.  In this case, \eqref{E:approx-sol} will be satisfied provided for both $j=1,2$, we have
$$\partial_t(\alpha_j g_j\rho_j) = JH''(g_j\phi)(\alpha_jg_j\rho_j) + \Pi_z^\perp J((g_j\phi)^2\overline{g_{3-j}\phi} + 2|g_j\phi|^2g_{3-j}\phi) + O_{H^1}(h^{4-}+\|w\|_{H^1}^2)$$
In the proof, we delete the $j$-subscripts, denote $\tilde g =g_{3-j}$ and moreover assume that $\dot \alpha_j= O(h^2)$.  Then we aim to solve
$$\partial_t(g\rho) = JH''(g\phi)(g\rho) + \alpha^{-1} \Pi^\perp J((g\phi)^2\overline{\tilde g\phi} + 2|g\phi|^2\tilde g\phi) +O_{H^1}(h^{4-}+\|w\|_{H^1}^2)$$
The form of the operator $\Pi^\perp$ can be simplified, since we only need to keep the $O(1)$ and $O(h)$ parts.  This equation takes the form
\begin{equation}
\label{E:corr1}
\partial_t (g\rho) = JH''(g\phi) (g\rho) +\alpha^{-1} \Pi^\perp Jgf +O_{H^1}(h^{4-}+\|w\|_{H^1}^2)
\end{equation}
where, in the case $j=1$,
\begin{equation}
\label{E:corr201}
f
\begin{aligned}[t]
&= g_1^{-1}[(g_1\phi)^2\overline{g_2\phi}] + 2 g_1^{-1}[|g_1\phi|^2 g_2\phi] \\
&= 
\begin{aligned}[t]
& e^{i\omega_1} e^{i\omega_2 x} \mu_j^2 \mu_{3-j} \phi(\mu_{3-j}\mu_j^{-1}x+(a_j-a_{3-j})) \phi(x)^2 \\
&+ 2 e^{i\omega_3} e^{i\omega_4x} \mu_j^2 \mu_{3-j} \phi(\mu_{3-j}\mu_j^{-1}x + (a_j-a_{3-j})) \phi(x)^2
\end{aligned}
\end{aligned}
\end{equation}
with
\begin{align*}
&\omega_1 \defeq \theta_1-\theta_2 - \mu_{3-j}^{-1}v_{3-j}(a_j-a_{3-j}) \\
&\omega_2 \defeq \mu_j^{-2}v_j - \mu_{3-j}^{-1}\mu_j^{-1} v_{3-j} \\
&\omega_3 \defeq \theta_2-\theta_1 + \mu_{3-j}^{-1}v_{3-j}(a_j-a_{3-j}) \\
&\omega_4 \defeq \mu_j^{-2}v_j + \mu_{3-j}^{-1}\mu_j^{-1}v_{3-j} 
\end{align*}
In the case $j=2$,
\begin{equation}
\label{E:corr202}
f= g_2^{-1}[(g_2\phi)^2\overline{g_1\phi}] + 2 g_2^{-1}[|g_2\phi|^2 g_1\phi] \,.
\end{equation}
with a similar expansion.
The only important feature of these expressions is that $e^{(1-)\la x  \ra}f = O(h^2)$ and $e^{(1-)\la x \ra}\partial_t f = O(h^3)$ when $\theta_1-\theta_2$ is a constant (as in the case of the even/odd symmetry assumption in Theorem \ref{T:main1}.

Now we begin the task of pulling back \eqref{E:corr1} --
applying $g^*$ to \eqref{E:corr1}, we obtain
\begin{equation}
\label{E:C5}
g^*\partial_t g\rho = g^*[JH''(g\phi)(g\rho)] + \alpha^{-1} g^*\Pi^\perp Jgf + O_{H^1}(h^{4-}+\|w\|_{H^1}^2) \,.
\end{equation}

First, we aim to simplify the term $g^*[JH''(g\phi)(g\rho)]$ in \eqref{E:C5}. 
Let $K_g(\phi) = H(g\phi)$.   It follows that $K_g'(\phi) = g^*H'(g\phi)$ and $K_g''(\phi) = g^*[H''(g\phi)(g\bullet)]$.  By direct substitution, we compute:
$$K_g(u) = \mu^3 H(u) + \mu v P(u) + \frac12v^2\mu^{-1} M(u)$$
Since $g^*J = Jg^*$, we have
\begin{equation}
\label{E:C6}
\begin{aligned}
g^*[JH''(g\phi)(g\bullet)] &= JK''_g(\phi)\\
&= \mu^3JH''(\phi) + \mu v JP''(\phi) + \tfrac12 v^2\mu^{-1} JM''(\phi)
\end{aligned}
\end{equation}

Second, we seek to simplify the term $g^*\partial_tg\rho$ in \eqref{E:C5}.
Define the operators
\begin{align*}
&\tilde \partial_\mu \defeq \partial_\mu\big|_{(1,0,0,0)} =  \partial_x x \\
&\tilde \partial_a \defeq \partial_a \big|_{(1,0,0,0)} = - \partial_x \\
&\tilde \partial_\theta \defeq   \partial_\theta \big|_{(1,0,0,0)} = i \\
&\tilde \partial_v \defeq  \partial_v\big|_{(1,0,0,0)}=  ix
\end{align*}

Let
\begin{equation}
\label{E:C4}
\begin{aligned}
&\bar \partial_\mu \defeq g^*\partial_\mu g 
&&= \partial_x x - i\mu^{-2}vx 
&&= \tilde\partial_\mu -\mu^{-2}v\tilde \partial_v \\
&\bar \partial_a \defeq  g^*\partial_a g 
&&= - \mu^2 \partial_x -iv 
&&=\mu^2\tilde\partial_a-v\tilde\partial_\theta  \\
&\bar \partial_\theta \defeq  g^*\partial_\theta g 
&&= i\mu 
&&=\mu\tilde\partial_\theta \\
&\bar \partial_v \defeq  g^*\partial_v g 
&&= i\mu^{-1}x 
&&=\mu^{-1}\tilde\partial_v\\
\end{aligned}
\end{equation}
It follows from the chain rule that
$$g^*\partial_t g = \dot \mu \bar\partial_\mu + \dot a \bar \partial_a + \dot \theta \bar \partial_\theta + \dot v \bar \partial_v \,.$$
Using \eqref{E:eff-dyn},
\begin{equation}
\label{E:C7}
\begin{aligned}
g^*\partial_t g &= v\mu^{-1} \bar\partial_a + \tfrac12\mu^2 \bar \partial_\theta +O_{H^1}(h^2)\\
&= v\mu \tilde\partial_a + \tfrac12 \mu^3 \tilde\partial_\theta + O_{H^1}(h^2)
\end{aligned}
\end{equation}

Finally, we aim to simplify the term $g^*\Pi^\perp Jgf$ in \eqref{E:C5}.  We will show that
\begin{equation}
\label{E:C8}
g^* \Pi_{(\mu,a,\theta,v)} Jgf = \mu \Pi_{(1,0,0,0)} Jf
\end{equation}

Using that $J^*J^{-1}=-1$ and \eqref{E:C4}, we obtain
\begin{align*}
g^* \Pi J gf 
&= 
\begin{aligned}[t]
&\la J gf, J^{-1}\partial_a g \phi\ra g^*\partial_v g \phi 
- \la Jgf, J^{-1}\partial_v g \phi \ra g^*\partial_a g \phi \\
&+\la Jgf, J^{-1} \partial_\mu g \phi\ra g^* \partial_\theta g\phi 
- \la Jgf, J^{-1} \partial_\theta g \phi \ra g^* \partial_\mu g \phi
\end{aligned}\\
&= 
- \la f, \bar \partial_a \phi \ra \bar \partial_v \phi 
+ \la f,  \bar \partial_v \phi \ra  \bar \partial_a \phi
- \la f, \bar \partial_\mu \phi \ra  \bar \partial_\theta \phi 
+ \la f,  \bar \partial_\theta \phi \ra \bar \partial_\mu \phi
\end{align*}
Substituting \eqref{E:C4}, after a few cancelations we obtain
\begin{align*}
\mu^{-1}g^* \Pi J gf 
&= 
- \la f, \tilde \partial_a \phi \ra \tilde \partial_v \phi 
+ \la f,  \tilde  \partial_v \phi \ra  \tilde \partial_a \phi
- \la f, \tilde \partial_\mu \phi \ra  \tilde \partial_\theta \phi 
+ \la f,  \tilde \partial_\theta \phi \ra \tilde \partial_\mu \phi \\
&=
\begin{aligned}[t]
&+ \la Jf, J^{-1}\tilde \partial_a \phi \ra \tilde \partial_v \phi 
- \la Jf,  J^{-1}\tilde  \partial_v \phi \ra  \tilde \partial_a \phi\\
&+ \la Jf, J^{-1}\tilde \partial_\mu \phi \ra  \tilde \partial_\theta \phi 
- \la Jf,  J^{-1}\tilde \partial_\theta \phi \ra \tilde \partial_\mu \phi
\end{aligned}
\end{align*}
which establishes \eqref{E:C8}.

Note that it follows from \eqref{E:C8} that
$$g^*\Pi^\perp_{(\mu,a,\theta,v)} g Jf = \mu \Pi^\perp_{(1,0,0,0)} Jf$$

Using the expressions \eqref{E:C6}, \eqref{E:C7}, and \eqref{E:C8}, the equation \eqref{E:C5} converts to
$$v\mu \tilde\partial_a \rho + \tfrac12 \mu^3 \tilde\partial_\theta\rho + \mu \partial_t \rho = 
\begin{aligned}[t]
&\mu^3JH''(\phi)(\rho) + \mu v JP''(\phi)(\rho)  \\
&+ \alpha^{-1}\mu \Pi_{(1,0,0,0)}^\perp Jf + O_{H^1}(h^{4-}+\|w\|_{H^1}^2)
\end{aligned}
$$
Noting that $JP''(\phi) = \tilde\partial_a$ and $JM''(\phi) = -\tilde \partial_\theta$, the equation becomes
$$\tfrac12 \mu^3 JM''(\phi)\rho  + \mu^3JH''(\phi)\rho - \mu \partial_t \rho =  -\alpha^{-1}\mu \Pi_{(1,0,0,0)}^\perp Jf + O_{H^1}(h^{4-}+\|w\|_{H^1}^2)$$
Hence we see we should take $\alpha = \mu^{-2}$ so that the equation becomes
$$
\tfrac12  JM''(\phi)\rho + JH''(\phi)\rho - \mu^{-2}\partial_t \rho =  -\Pi_{(1,0,0,0)}^\perp Jf + O_{H^1}(h^{4-}+\|w\|_{H^1}^2)
$$
Now apply $J^{-1}$ to obtain the equation
\begin{equation}
\label{E:corr200}
S\rho = J^{-1}\mu^{-2}\partial_t \rho -J^{-1} \Pi_{(1,0,0,0)}^\perp Jf + O(h^{4-}+\|w\|_{H^1}^2)
\end{equation}
where the operator
$$S(\rho) \defeq \tfrac12 M''(\phi)(\rho) + H''(\phi)(\rho) = \tfrac12\rho - \tfrac12 \partial_x^2\rho - 2|\phi|^2\rho - \phi^2\bar\rho $$
is self-adjoint with respect to the inner product $\la u,v\ra = \Re
\int u\bar v$.  The kernel is spanned by $\tilde \partial_a \phi$
and $\tilde \partial_\theta \phi$.  

\begin{lemma}[properties of $S$] \quad
\label{L:S-prop}
\begin{enumerate}
\item For any $f\in H^1$, let $F = J^{-1} \Pi_{(1,0,0,0)}^\perp Jf$.  Then $F$ satisfies the orthogonality conditions
\begin{align}
\label{E:corr210}
&\la F, \tilde \partial_\theta \phi \ra =0 \,,
&\la F, \tilde \partial_a \phi \ra =0 \\
\label{E:corr211}
&\la F, \tilde \partial_\mu \phi \ra = 0 \,,
&\la F, \tilde \partial_v \phi \ra =0
\end{align}
\item For any $F$ satisfying \eqref{E:corr210}, $S^{-1}F$ is defined and satisfies the boundedness properties
\begin{align}
\label{E:corr212}
&\|S^{-1}F \|_{H^1} \lesssim \|F\|_{L^2}\,, \\
\label{E:corr219}
&\| e^{\sigma \la x \ra} S^{-1} F \|_{H^2} \lesssim \| e^{\sigma \la x \ra} F\|_{L^2} \,.
\end{align}
for $0\leq \sigma <1$.
\item  For any $F$ satisfying \eqref{E:corr210} and \eqref{E:corr211}, $S^{-1}F$ satisfies the orthogonality properties
\begin{align}
\label{E:corr213}
&\la S^{-1}F, \tilde \partial_\theta \phi \ra =0\,, 
&\la S^{-1}F, \tilde \partial_a \phi \ra =0 \\
\label{E:corr214}
&\la J^{-1}S^{-1}F, \tilde \partial_\theta \phi \ra =0\,, 
&\la J^{-1}S^{-1}F, \tilde \partial_a \phi \ra =0
\end{align}
\end{enumerate}
\end{lemma}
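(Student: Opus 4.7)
My plan breaks into the three parts of the statement. Part (1) is an immediate algebraic consequence of the defining property of the symplectic projection. Since $T_\phi M = \spn\{\tilde\partial_\mu\phi,\tilde\partial_a\phi,\tilde\partial_\theta\phi,\tilde\partial_v\phi\}$ (with $\tilde\partial$ as in \eqref{E:C4}), the formula \eqref{E:proj1} (specialized to the four-parameter soliton manifold at the identity) gives $\la \Pi^\perp h, J^{-1}\tilde\partial_{z^\ell}\phi\ra = 0$ for every $h\in L^2$ and every $\ell\in\{\mu,a,\theta,v\}$. Setting $h = Jf$ and using $\la J^{-1}u,v\ra = -\la u,J^{-1}v\ra$ (a consequence of $J^*=-J$),
$$\la F,\tilde\partial_{z^\ell}\phi\ra \;=\; -\la \Pi^\perp Jf,J^{-1}\tilde\partial_{z^\ell}\phi\ra \;=\; 0,$$
which yields both \eqref{E:corr210} and \eqref{E:corr211} at once.

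For part (2), I would split $S$ by real and imaginary parts. Writing $\rho = p+iq$ and $F = G+iH$ with $p,q,G,H$ real, direct computation gives $S\rho = L_+ p + iL_- q$, where $L_+ = \tfrac12 - \tfrac12 \partial_x^2 - 3\phi^2$ and $L_- = \tfrac12 - \tfrac12 \partial_x^2 - \phi^2$. The hypothesis \eqref{E:corr210} unpacks to $G\perp\phi_x$ and $H\perp\phi$. Since $L_-\geq 0$ has one-dimensional kernel $\spn\{\phi\}$ and $L_+$ has a single negative eigenvalue with kernel $\spn\{\phi_x\}$ (the standard Weinstein spectral picture), the Moore--Penrose inverses $p = L_+^{-1}G$ and $q = L_-^{-1}H$ are uniquely defined subject to $p\perp\phi_x$, $q\perp\phi$, and they enjoy the standard elliptic bounds $\|p\|_{H^2}\lesssim\|G\|_{L^2}$, $\|q\|_{H^2}\lesssim\|H\|_{L^2}$, giving \eqref{E:corr212}.

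The weighted estimate \eqref{E:corr219} is the main analytical obstacle, and the cutoff $\sigma<1$ must enter essentially. My plan is to conjugate: set $S_\sigma \defeq e^{\sigma\la x\ra} S e^{-\sigma\la x\ra}$, $\tilde\rho \defeq e^{\sigma\la x\ra}\rho$, $\tilde F \defeq e^{\sigma\la x\ra} F$. The principal part of $S_\sigma$ expands as $\tfrac12(1-(\sigma\la x\ra'(x))^2) - \tfrac12\partial_x^2 + (\textnormal{first-order drift})$, and since $\la x\ra'(x)\to\pm 1$ as $|x|\to\infty$, the zeroth-order coefficient tends to $\tfrac12(1-\sigma^2) > 0$ precisely when $\sigma<1$; the remaining potentials $3\phi^2$ and $\phi^2$ are exponentially decaying, hence compact, perturbations. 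Combining this with the $L^2$-invertibility of $S$ on $(\ker S)^\perp$ already established, a Fredholm/energy argument yields $\|\tilde\rho\|_{H^2}\lesssim\|\tilde F\|_{L^2}$, which is \eqref{E:corr219}.

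For part (3), the orthogonalities \eqref{E:corr213} are built into the Moore--Penrose convention used to define $S^{-1}$ in part (2), since $\ker S = \spn\{\tilde\partial_a\phi,\tilde\partial_\theta\phi\}$. The deeper relations \eqref{E:corr214} rest on the identities
$$S(\tilde\partial_\mu\phi) \;=\; -\phi \;=\; J^{-1}\tilde\partial_\theta\phi, \qquad S(\tilde\partial_v\phi) \;=\; -i\phi_x \;=\; J^{-1}\tilde\partial_a\phi,$$
the first obtained by differentiating the profile equation $-\tfrac12\phi_\mu'' + \tfrac{\mu^2}{2}\phi_\mu - \phi_\mu^3 = 0$ in $\mu$ at $\mu=1$ (yielding $L_+\tilde\partial_\mu\phi = -\phi$), and the second from the short computation $L_-(x\phi) = -\phi_x$. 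Then, using $\la J^{-1}u,v\ra = -\la u,J^{-1}v\ra$ and self-adjointness of $S$,
$$\la J^{-1}S^{-1}F,\tilde\partial_\theta\phi\ra \;=\; -\la S^{-1}F, J^{-1}\tilde\partial_\theta\phi\ra \;=\; -\la S^{-1}F, S\tilde\partial_\mu\phi\ra \;=\; -\la F, \tilde\partial_\mu\phi\ra \;=\; 0$$
by \eqref{E:corr211}, and an identical calculation handles $\la J^{-1}S^{-1}F,\tilde\partial_a\phi\ra$ via $\tilde\partial_v\phi$. Thus the hardest step of the lemma is the weighted estimate in \eqref{E:corr219}; the orthogonality claims are algebraic and the unweighted invertibility is standard spectral theory.
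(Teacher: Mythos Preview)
Your proposal is correct and follows essentially the same route as the paper. The only tactical differences are that the paper treats $S$ as a whole (via its isolated zero eigenvalue) rather than splitting into $L_\pm$, conjugates by the simpler one-sided weight $e^{\sigma x}$ (handling $\sigma>0$ and $\sigma<0$ separately, which makes the conjugated operator exactly constant-coefficient modulo $\phi^2$) rather than by $e^{\sigma\la x\ra}$, and obtains the identities $S(\tilde\partial_\mu\phi)=J^{-1}\tilde\partial_\theta\phi$, $S(\tilde\partial_v\phi)=J^{-1}\tilde\partial_a\phi$ by differentiating the abstract relation $W'(\eta)=0$ in $\mu$ and $v$ rather than by your direct $L_\pm$ computations.
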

\begin{proof}
Item (1) is immediate from the definition of $\Pi_{(1,0,0,0)}$.
For item (2), we recall that $\ker S = \spn \{\tilde \partial_a \phi, \tilde \partial_\theta \phi \}$ and moreover, $0$ is an isolated point in the spectrum of $S$.  Thus $S^{-1}: (\ker S)^\perp \to (\ker S)^\perp$ is bounded as an operator on $L^2$.  The inequality \eqref{E:corr212} follows from this and elliptic regularity.  To prove \eqref{E:corr219}, it suffices to show that for any $G$ and any $|\sigma| <1$, we have
\begin{equation}
\label{E:corr220}
\|G\|_{H^2} \lesssim \|e^{\sigma x} S e^{-\sigma x} G \|_{L^2} + \|e^{-2|x|}G\|_{L^2}
\end{equation}
Indeed, \eqref{E:corr219} follows by taking $G=e^{\sigma x}S^{-1}F$, appealing to \eqref{E:corr212}, and separately considering $\sigma>0$ and $\sigma <0$ with $|\sigma|<1$.  To establish \eqref{E:corr220}, we calculate
\begin{equation}
\label{E:conj-S}
\begin{aligned}[t]
e^{\sigma x} S e^{-\sigma x} G  &= (S + \sigma\partial_x - \tfrac12\sigma^2)G \\
&= (\frac12(1-\sigma^2) +\sigma\partial_x  - \frac12 \partial_x^2 )G - 2\phi^2G - \phi^2 \bar G 
\end{aligned}
\end{equation}
and hence
$$(\frac12(1-\sigma^2) + \sigma \partial_x - \frac12\partial_x^2)G = e^{\sigma x}Se^{-\sigma x}G + 2\phi^2G + \phi^2 \bar G$$
On the left-hand side, we have an operator with symbol $\frac12(1-\sigma^2) + \sigma i\xi + \xi^2$, which dominates $\la \xi\ra^2$ under our assumption on $\sigma$.  From this and the fact that $|\phi(x)|^2\leq e^{-2|x|}$, we conclude \eqref{E:corr220}. 

For item (3), \eqref{E:corr213} follows from the fact that $S^{-1}: (\ker S)^\perp \to (\ker S)^\perp$.  To establish \eqref{E:corr214}, we note that by \eqref{E:corr211},
$$0 = \la F, \tilde \partial_\mu \phi \ra = \la S^{-1}F, S\tilde \partial_\mu \phi\ra$$
and similarly
$$0 = \la F, \tilde \partial_v \phi \ra = \la S^{-1}F, S\tilde \partial_v \phi \ra$$
and thus it suffices to establish that $S(\tilde \partial_v \phi) = J^{-1}\tilde \partial_a \phi$ and $S(\tilde \partial_\mu \phi)= J^{-1}\tilde \partial_\theta \phi$.  To prove these equalities, recall
\begin{equation}
\label{E:W-recall}
0=W'(\eta)=(\frac12 \mu^2+\frac12\mu^{-2}v^2)M'(\eta) - \mu^{-1}vP'(\eta) + H'(\eta)
\end{equation}
Taking $\partial_v$ and evaluating at $(\mu,a,\theta,v)=(1,0,0,0)$ gives  
$$S(\tilde \partial_v \phi) = P'(\phi)=J^{-1}\tilde \partial_a \phi \,.$$
Taking $\partial_\mu$ of \eqref{E:W-recall} and evaluating at $(\mu,a,\theta,v)=(1,0,0,0)$ gives  
$$S(\tilde \partial_\mu \phi) = -M'(\phi) = J^{-1}\tilde \partial_\theta \mu$$
\end{proof}

Recall that the task is to solve \eqref{E:corr200} where $f$ is either \eqref{E:corr201} or \eqref{E:corr202}.  At this point, we impose the even/odd solution assumption as in Theorem \ref{T:main1} which implies that $\theta_1-\theta_2$ is constant.  The other time dependent parameters in \eqref{E:corr201}, \eqref{E:corr202} are all slowly varying so that $\partial_t f_j = O(h^3)$.  Thus, we can solve \eqref{E:corr200} by iteration.
Let\footnote{As indicated earlier, $\rho$ can stand for either $\rho_1$ or $\rho_2$.  The superscript introduced here is different and meant to indicate part of an asymptotic expansion for either function.  In other words, we have $\rho_j = \rho_j^1+\rho_j^2$ for both $j=1,2$.}
\begin{equation}
\label{E:corr230}
\rho^1 = -S^{-1}J^{-1} \Pi_{(1,0,0,0)}^\perp J f
\end{equation}
By Lemma \ref{L:S-prop}(1)(2), this is well-defined with $\rho^1 = O(h^{2-})$ and satisfying all the needed regularity properties.  With $\rho^2$ as yet undefined, we plug $\rho^1+\rho^2$ into \eqref{E:corr200} to obtain
$$S\rho^2 = J^{-1}\mu^{-2}\partial_t\rho^1 + J^{-1}\mu^{-2}\partial_t\rho^2 + O(h^{4-}+\|w\|_{H^1}^2)$$
As mentioned previously, $\partial_t f = O(h^{3-})$ and thus
$$J^{-1}\mu^{-2}\partial_t\rho^1 = -J^{-1}\mu^{-2}S^{-1}J^{-1}\Pi_{(1,0,0,0)}^\perp J\partial_t f$$
is also $O(h^{3-})$.  By Lemma \ref{L:S-prop}(3), in particular  \eqref{E:corr214}, with $F=J^{-1}\Pi_{(1,0,0,0)}^\perp J\partial_t f$, we have that 
$$\tilde F \defeq J^{-1}\mu^{-2}\partial_t\rho^1 = -J^{-1}\mu^{-2}S^{-1}F$$
satisfies the condition \eqref{E:corr210}, and hence we can apply Lemma \ref{L:S-prop}(2) with $F$ replaced by $\tilde F$.  That is, the function
\begin{equation}
\label{E:corr231}
\rho^2 \defeq S^{-1} \tilde F = -S^{-1}J^{-1}\mu^{-2}S^{-1}J^{-1}\Pi_{(1,0,0,0)}^\perp J\partial_t f
\end{equation}
satisfies all the needed regularity properties.  Note further that $\partial_t \rho^2 =O(h^{4-})$.  Upon substituting $\rho^1+\rho^2$ into \eqref{E:corr200} with $\rho^1$ defined by \eqref{E:corr230} and $\rho^2$ defined by \eqref{E:corr231}, we find that equality holds with $O(h^{4-})$ error.  

Thus we have successfully constructed a solution to the approximate equation \eqref{E:approx-sol}.  We summarize our conclusions in the next lemma.

\begin{lemma}[approximate solution]
Recall the operator $g_j$ associated to $(\mu_j,a_j,\theta_j,v_j)$ defined in \eqref{E:corr240} and $f_j$ defined in \eqref{E:corr201} ($j=1$) or \eqref{E:corr202} ($j=2$).  Let $\rho_j^1$ be given by \eqref{E:corr230} and then let $\rho_j^2$ be given by \eqref{E:corr231}.  Then $\rho_j^k$ for $1\leq j,k\leq 2$ satisfy 
$$\|e^{(1-)\la x \ra} \rho_j^k \|_{H^2} \lesssim h^{1+k-} \,,$$
Let
$$\nu_z = \mu_1^2 g_1 (\rho_1^1+\rho_1^2) + \mu_2^2 g_2 (\rho_2^1+\rho_2^2)$$
Suppose that the parameter $z\in \mathbb{R}^8$ evolves according to the ODEs obtained from Lemma \ref{L:ODEs} (in the same phase or opposite phase case).  Then $\nu_z(x)$ solves \eqref{E:approx-sol}.
\end{lemma}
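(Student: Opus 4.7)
The plan is to organize the work already done in the section into a verification of the two assertions of the lemma: the weighted $H^2$ bounds on each $\rho_j^k$, and the fact that $\nu_z = \sum_j \mu_j^2 g_j(\rho_j^1 + \rho_j^2)$ solves \eqref{E:approx-sol} up to an $O_{H^1}(h^{4-})$ residual.

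First I would pin down weighted bounds on the driving data. From the explicit formulas \eqref{E:corr201} and \eqref{E:corr202}, each $f_j$ is a product of $\phi(x)^2$ (which decays like $e^{-2|x|}$) with a translate of $\phi$ centered at $\pm(a_j-a_{3-j}) \sim \pm 2\log h^{-1}$, so $\|e^{(1-)\la x\ra} f_j\|_{H^2} \lesssim h^2$. Under the even/odd assumption $\theta_1-\theta_2$ is constant, and the remaining time dependence enters only through parameters evolving at rate $O(h^{1-})$ by Lemma \ref{L:ODEs}, so $\|e^{(1-)\la x\ra} \partial_t f_j\|_{L^2} \lesssim h^{3-}$.

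Next I would iterate Lemma \ref{L:S-prop}. By part (1), $F \defeq J^{-1}\Pi_{(1,0,0,0)}^\perp J f_j$ satisfies the orthogonality condition \eqref{E:corr210}, so part (2) (specifically \eqref{E:corr219}) gives $\|e^{(1-)\la x\ra} \rho_j^1\|_{H^2} \lesssim \|e^{(1-)\la x\ra} f_j\|_{L^2} \lesssim h^{2-}$. For $\rho_j^2$, the input is $\tilde F \defeq -J^{-1}\mu_j^{-2}S^{-1}F'$, where $F' = J^{-1}\Pi_{(1,0,0,0)}^\perp J\partial_t f_j$ again satisfies \eqref{E:corr210} and now also \eqref{E:corr211}. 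To justify inverting $S$ on $\tilde F$, I invoke Lemma \ref{L:S-prop}(3), specifically \eqref{E:corr214}, which places $\tilde F$ in $(\ker S)^\perp$. A second application of \eqref{E:corr219} then yields $\|e^{(1-)\la x\ra}\rho_j^2\|_{H^2} \lesssim h^{3-}$, and an extra time derivative costs one more factor of $h$, so that $\partial_t \rho_j^2 = O_{H^1}(h^{4-})$, comfortably below our error threshold.

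Finally I would reverse the reduction chain. By construction, substituting $\rho_j = \rho_j^1 + \rho_j^2$ into \eqref{E:corr200} produces equality with residual $J^{-1}\mu_j^{-2}\partial_t\rho_j^2 = O_{H^1}(h^{4-})$; this is equivalent, via the pullback identities \eqref{E:C6}--\eqref{E:C8} and the choice $\alpha_j = \mu_j^{-2}$, to \eqref{E:corr1} for each $j$. Summing over $j=1,2$ gives \eqref{E:approx-sol}, provided the cross-terms in $H''(u_z)\nu_z$ coupling $g_j\rho_j$ with the soliton $\eta_{3-j}$ are $O_{H^1}(h^{4-})$; this is so because $g_j\rho_j$ has effective support near $a_j$ with weighted size $h^{2-}$, while $|\eta_{3-j}|^2 \lesssim h^{2-}$ on that region. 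The main obstacle in the argument is the orthogonality bookkeeping at each inversion of $S$: since $S^{-1}$ exists only on $(\ker S)^\perp$ and $J^{-1}$ does \emph{not} preserve this complement, Lemma \ref{L:S-prop}(3) is essential, and it succeeds only because all four orthogonalities in \eqref{E:corr210}--\eqref{E:corr211} are automatically supplied by $\Pi^\perp_{(1,0,0,0)}$; a secondary concern is ensuring that the $h^{0-}$ losses from $\sigma < 1$ in \eqref{E:corr219} and from the slow growth of $|v|$ and $|a_0-a|$ in the attractive case do not compound to a stronger loss after two applications of $S^{-1}$.
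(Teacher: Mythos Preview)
Your proposal is correct and follows essentially the same approach as the paper: the lemma is a summary of the work in \S\ref{S:approx-sol}, and you have accurately reconstructed the logic---weighted bounds on $f_j$ and $\partial_t f_j$, two applications of Lemma~\ref{L:S-prop} (with part (3) supplying the orthogonality needed for the second inversion), and then reversing the pullback identities \eqref{E:C6}--\eqref{E:C8} while checking that the cross-terms coupling $g_j\rho_j$ to $\eta_{3-j}$ are $O_{H^1}(h^{4-})$. Your identification of the orthogonality bookkeeping as the main obstacle is exactly the point the paper is making with Lemma~\ref{L:S-prop}(3).
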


\section{Lyapunov functional}
\label{S:Lyapunov}

The final step is to show that the true solution $u$ to \eqref{E:NLS-Hform} is approximately the approximate solution $\tilde u_z = u_z+\nu_z$.  For this purpose, we introduce a Lyapunov functional.  First, some general considerations.  We consider the ``perturbed'' 8-dimensional manifold
$$\tilde M = \{ \, \tilde u_z \, | \, z\in \mathbb{R}^8 \, \}$$
Introduce the notation $\tilde w = u-\tilde u_z$ (so that $w= \tilde w + \nu_z$).  Now it follows from \eqref{E:approx-sol} that
\begin{equation}
\label{E:approx-sol-1}
\partial_t \tilde u_z = JH'(\tilde u_z)  + F \,.
\end{equation}
where $F = O_{H^1}(h^{4-}+\|\tilde w\|_{H^1}^2)$.

Suppose that $W_z:L^2\to \mathbb{R}$ is a densely defined functional.  We write $\partial_{z^\ell} W_z:L^2\to \mathbb{R}$ to indicate partial derivatives with respect to $z$ and 
$$W_z'(u) \in T^*_uL^2 \underset{\text{metric }g}{\simeq} T_uL^2 \simeq L^2$$ 
to indicate partial derivatives with respect to $u$ (ignoring the interdependence between $z$ and $u$ given by \eqref{E:decomp}, \eqref{E:orth}).  

Suppose that $W_z$ can be extended to a differentiable functional $H^1\to \mathbb{R}$; then for each $u\in H^1$, we have a bounded linear map $W_z'(u):H^1\to \mathbb{R}$ which, under the aforementioned identification, becomes a function belonging to $H^{-1}$.  In fact, our choice of $W_z$ is differentiable at all orders as a map $H^1\to \mathbb{R}$, which is to say that $W_z^{(k)}(u): \underbrace{H^1\times \cdots \times H^1}_{k \text{ copies}} \to \mathbb{R}$ is a bounded $k$-multilinear map.

We further assume that $\partial_{z^\ell} W_z(u) =0$ unless $|\dot z^\ell| \lesssim h$. Let
\begin{equation}
\label{E:L-def}
L_z(u) = W_z(u)-W_z(\tilde u_z) - \la W_z'(\tilde u_z), \tilde w \ra \,.
\end{equation}
That is, $L_z(u)$ is the quadratic part of $W_z(u)$ above the base manifold $\tilde M$.  
Now viewing $u=u(t)$ and $z=z(t)$ in accordance with \eqref{E:decomp}, \eqref{E:orth} (and thus reinstating the interdependence between $z$ and $u$), we have, for any functional $G_z:L^2\to \mathbb{R}$, 
$$\partial_t G_z(u) = \la G_z'(u), \partial_t u\ra + \sum_{k=1}^8 [\partial_{z^k}G](u)\dot z^k \,.$$
This leads to:

\begin{lemma}
\label{L:lyap}
Suppose that $u$ solves \eqref{E:NLS-Hform} and $z$ evolves so that $\tilde u_z$ solves \eqref{E:approx-sol-1}, and that $L_z(u)$ is given by \eqref{E:L-def}, the quadratic part of $W_z(u)$ above $\tilde M$.  Then
\begin{equation}
\label{E:lya4}
\partial_t L_z(u) = \{ H,W_z\}(u) - \{H, W_z\}(\tilde u_z) - \la \{H, W_z\}'(\tilde u_z), \tilde w\ra  - E_1+E_2\,,
\end{equation}
where
$$E_1 \defeq  \la W''(\tilde u_z) F, \tilde w\ra + \la W'(\tilde u_z), [JH'(u)-JH'(\tilde u_z) - JH''(\tilde u_z)\tilde w] \ra$$
and
$$E_2 \defeq \sum_{k=1}^8 \left( [\partial_{z^k}W_z](u) - [\partial_{z^k}W_z](\tilde u_z) - \la [\partial_{z^k}W_z'](\tilde u_z),\tilde w\ra \right) \dot z^k$$
In other words, $\partial_t L_z(u)$ is, up to error $E_1$ and $E_2$, the quadratic part of $\{H,W_z\}(u)$ above $\tilde M$.  Note that $E_2$ just involves the quadratic part of $[\partial_{z^k}W](u)$ above $\tilde M$.
\end{lemma}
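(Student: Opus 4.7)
The plan is a direct computation via the product and chain rules; the content is essentially bookkeeping organized around one algebraic identity for $\{H,W_z\}'(\tilde u_z)$.

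For any functional $G_z:L^2\to\mathbb{R}$, the chain rule gives $\partial_t G_z(u)=\la G_z'(u),\partial_t u\ra+\sum_{k=1}^8[\partial_{z^k}G_z](u)\dot z^k$. Applying this with $G_z=W_z$ and $\partial_t u=JH'(u)$ produces $\partial_t W_z(u)=\{H,W_z\}(u)+\sum_k[\partial_{z^k}W_z](u)\dot z^k$, where by definition $\{H,W_z\}(u)\defeq\la W_z'(u),JH'(u)\ra$. Applying it with $u$ replaced by $\tilde u_z$ and $\partial_t\tilde u_z=JH'(\tilde u_z)+F$ yields the analogous formula augmented by $\la W_z'(\tilde u_z),F\ra$.

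For the remaining piece $\la W_z'(\tilde u_z),\tilde w\ra$, I apply the product rule. The factor $\tilde w$ is differentiated via $\partial_t\tilde w=JH'(u)-JH'(\tilde u_z)-F$, whose linear part $JH''(\tilde u_z)\tilde w$ I split off from the quadratic Taylor remainder. The factor $W_z'(\tilde u_z)$ is differentiated by the chain rule on $W_z'$ viewed as a map $L^2\to L^2$, producing $W_z''(\tilde u_z)[JH'(\tilde u_z)+F]+\sum_k[\partial_{z^k}W_z'](\tilde u_z)\dot z^k$; pairing with $\tilde w$ and invoking the self-adjointness of $W_z''(\tilde u_z)$ organizes the outputs into manageable inner products. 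The key identity, obtained by directly differentiating $\{H,W_z\}(u)=\la W_z'(u),JH'(u)\ra$ in $u$ and using the self-adjointness of $W_z''$ and $H''$ together with $J^*=-J$, is
\begin{equation*}
\la\{H,W_z\}'(\tilde u_z),\tilde w\ra=\la W_z''(\tilde u_z)JH'(\tilde u_z),\tilde w\ra+\la W_z'(\tilde u_z),JH''(\tilde u_z)\tilde w\ra,
\end{equation*}
which lets the two ``linear in $JH$'' contributions from the product rule recombine into exactly the left-hand side.

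Assembling $\partial_t L_z(u)=\partial_t W_z(u)-\partial_t W_z(\tilde u_z)-\partial_t\la W_z'(\tilde u_z),\tilde w\ra$ from the three pieces, the two copies of $\la W_z'(\tilde u_z),F\ra$ cancel, the Poisson-bracket fragments collapse into $\{H,W_z\}(u)-\{H,W_z\}(\tilde u_z)-\la\{H,W_z\}'(\tilde u_z),\tilde w\ra$, the leftover $\la W_z''(\tilde u_z)F,\tilde w\ra$ term together with the $\tilde w$-quadratic Taylor remainder of $JH$ combine into $-E_1$, and the three $\dot z^k$ sums telescope into $+E_2$. The main obstacle is algebraic rather than analytic: one must correctly pair the term arising from $\partial_t[W_z'(\tilde u_z)]$ with the linearization inside $\la W_z'(\tilde u_z),\partial_t\tilde w\ra$ via the Poisson-bracket identity above, since a naive expansion would not manifestly reproduce the compact form \eqref{E:lya4}.
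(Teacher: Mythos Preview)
Your proposal is correct and follows essentially the same approach as the paper: compute $\partial_t W_z(u)$, $\partial_t W_z(\tilde u_z)$, and $\partial_t\la W_z'(\tilde u_z),\tilde w\ra$ separately via the chain/product rules, then subtract and observe the cancellation of the two $\la W_z'(\tilde u_z),F\ra$ terms. The only (minor, expository) difference is that you isolate the identity $\la\{H,W_z\}'(\tilde u_z),\tilde w\ra=\la W_z''(\tilde u_z)JH'(\tilde u_z),\tilde w\ra+\la W_z'(\tilde u_z),JH''(\tilde u_z)\tilde w\ra$ explicitly, whereas the paper leaves it implicit in the final ``taking \eqref{E:lya1} minus \eqref{E:lya2} minus \eqref{E:lya3}'' step.
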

In the typical application of this lemma (as for our $W_z$, defined below), we have bounded operators $W_z''(\tilde u_z):H^1\to H^{-1}$ and $W_z'''(\tilde u_z):H^1\times H^1 \to L^2$ which implies the bound
$$|E_1| \lesssim \|F\|_{H^1}\|\tilde w\|_{H^1} + \|W'(\tilde u_z)\|_{H^1}\|\tilde w\|_{H^1}^2$$
Thus, one just needs $\|F\|_{H^1} \lesssim h^3$ and $\|W'(\tilde u_z) \|_{H^1} \lesssim h$; in our case we in fact have the stronger statements $\|F\|_{H^1} \lesssim h^{4-}$ and $\|W'(\tilde u_z)\|_{H^1} \lesssim h^{2-}$.  Moreover, in our case we will have
$$|E_2|\lesssim h^{2-} \|\tilde w\|_{H^1}^2$$
since $\dot \mu, \dot v = O(h^{2-})$.

\begin{proof}
By \eqref{E:L-def},
$$\partial_t L_z(u) = \partial_t W_z(u) - \partial_t W_z(\tilde u_z) - \partial_t \la W_z'(\tilde u_z),\tilde w \ra$$
We compute each of the three terms on the right-hand side separately.
\begin{align}
\partial_tW_z(u) 
\notag &= \la W_z'(u), \partial_t u \ra + \sum_{k=1}^8 [\partial_{z^k}W_z](u) \dot z^k \\
\label{E:lya1}
&= \la W_z'(u), JH'(u) \ra + \sum_{k=1}^8 [\partial_{z^k}W_z](u) \dot z^k
\end{align}
where we invoked \eqref{E:NLS-Hform}.  Second, we compute
\begin{align}
\notag
\partial_tW_z(\tilde u_z) 
&= \la W_z'(\tilde u_z), \partial_t \tilde u_z \ra + \sum_{k=1}^8 [\partial_{z^k}W_z](\tilde u_z) \dot z^k \\
\label{E:lya2}
&= \la W_z'(\tilde u_z), JH'(\tilde u_z) \ra + \la W_z'(\tilde u_z), F \ra + \sum_{k=1}^8 [\partial_{z^k}W_z](\tilde u_z) \dot z^k
\end{align}
where we invoked \eqref{E:approx-sol-1}.  Finally, we compute
\begin{align}
\notag
\indentalign \partial_t \la W_z'(\tilde u_z), u-\tilde u_z\ra \\
\notag
&= 
\begin{aligned}[t]
&\la W_z''(\tilde u_z) \partial_t \tilde u_z, u - \tilde u_z \ra 
+ \la W_z'(\tilde u_z), \partial_t u - \partial_t \tilde u_z \ra \\
&+ \sum_{k=1}^8 \la [\partial_{z^k} W]'(\tilde u_z), u-\tilde u_z \ra \dot z^k 
\end{aligned}\\
\label{E:lya3}
&= 
\begin{aligned}[t]
&\la W_z''(\tilde u_z)JH'(\tilde u_z), \tilde w\ra + \la W''(\tilde u_z)F, \tilde w \ra + \la W_z'(\tilde u_z), JH'(u)-JH'(\tilde u_z)\ra \\
&- \la W'(\tilde u_z),F \ra + \la \sum_{k=1}^8 [\partial_{z^k}W]'(\tilde u_z), \tilde w\ra \dot z^k
\end{aligned}
\end{align}
Taking \eqref{E:lya1} minus \eqref{E:lya2} minus \eqref{E:lya3}, noting the cancelation of $+ \la W_z'(\tilde u_z), F \ra$ in \eqref{E:lya2} with $- \la W'(\tilde u_z),F \ra$ in \eqref{E:lya3}, we obtain \eqref{E:lya4}.
\end{proof}

To produce $W_z(u)$, we use an idea of Martel-Merle-Tsai \cite{MMT}. Let
$$\Psi(x) =
\begin{cases}
1 & \text{if } x \geq 1 \\
0 & \text{if } x \leq -1 \\
\end{cases}
$$
such that $(\Psi'(x))^2\lesssim \min(\Psi(x), 1-\Psi(x))$.  Set $\delta = 4/(\log h^{-1})=4/a_0$, so $0<\delta \ll 1$.  
Introduce the localizations $\psi_2(x) =
\Psi(\delta x)$ and $\psi_1(x) = 1- \Psi(\delta x)$, and
set $M_j(u) = M(\psi_j^{1/2} u)$ and $P_j(u) = P(\psi_j^{1/2} u)$. Define
\begin{equation}
\label{E:lya30}
\begin{aligned}[t]
W_z(u) &\defeq -\sum_{j=1}^2 \frac{\partial H(\eta_j)}{\partial
\mu_j} M_j(u) - \sum_{j=1}^2 \frac{\partial H(\eta_j)}{\partial
v_j} P_j(u) + H(u) \\
&=  \frac12\sum_{j=1}^2 (\mu_j^2+\mu_j^{-2}v_j^2) M_j(u) - \sum_{j=1}^2 \mu_j^{-1}v_j P_j(u) + H(u)
\end{aligned}
\end{equation}
The Lyapunov functional $L_z(u)$ we use is then defined as in \eqref{E:L-def}.

Lemma \ref{L:lyap} facilitates the computation of $\partial_t L_z(u)$, since $W_z(u)$ is built from ``nearly conserved'' quantities.  Indeed, we have the following Poisson brackets:
\begin{align*}
&\{ H,M_j \}(u) =  \frac12 \Im \int \psi_j' \; \bar u u_x \\
&\{H, P_j\}(u) =  \int \psi_j' (\frac12 |u_x|^2 - \frac14 |u|^4) - \frac18 \int \psi_j''' |u|^2
\end{align*}
It thus follows from Lemma \ref{L:lyap} that 
\begin{equation}
\label{E:lya34}
\begin{aligned}
\partial_t L_z(u) = 
&\frac12 \sum_{j=1}^2 (\mu_j^2+\mu_j^{-2}v_j^2) \la \{H,M_j\}''(\tilde u_z)\tilde w , \tilde w\ra- \sum_{j=1}^2 \mu_j^{-1}v_j \la \{H,P_j\}''(\tilde u_z)\tilde w, \tilde w\ra \\
&+O(\|w\|_{H^1}^3) - E_1 + E_2
\end{aligned}
\end{equation}
For our choice of $W_z(u)$, as remarked earlier, we have suitable bounds for $E_1$ and $E_2$.  Moreover, once one imposes the even/odd solution assumption of Theorem \ref{T:main1}, we have $\mu_1=\mu_2$ and $v_1^2=v_2^2$, so the first term in \eqref{E:lya34} disappears.\footnote{In fact, this is more easily seen by observing that once $\mu_1=\mu_2$ and $v_1^2=v_2^2$, we have that the first term in \eqref{E:lya30} becomes $M(u)$, whose Poisson bracket vanishes.  We included the localization in this term to illustrate the difficulty in treating the asymmetric case -- one would not have that the first term in \eqref{E:lya34} is $O(h^5)$.}  Hence
\begin{equation}
\label{E:423}
|\partial_t L_z(u) | \lesssim  ((|v_1|+|v_2|) \delta + h)  \|\tilde w\|_{H^1}^2 +h^3 \|\tilde w\|_{H^1} + \|\tilde w\|_{H^1}^3
\end{equation}
Since $|v_j|\lesssim h^{-1}\log h^{-1}$ and $\delta \sim (\log h^{-1})^{-1}$, the term $(|v_1|+|v_2|) \delta \lesssim h$.

Now we turn to the matter of obtaining a lower bound for $L_z(u)$.
First note that
$$\la W_z''(\tilde u_z)\tilde w, \tilde w\ra = L_z(u) + O(\|\tilde w\|_{H^1}^3)\,.$$
Given that $\|\tilde \nu _z \|_{H^1} \lesssim h^{2-}$, we have
\begin{equation}
\label{E:lya80}
\la W_z''(u_z)w, w\ra = L_z(u) + O(h^{4-}) + O(h^{0+})\|w\|_{H^1}^2\,.
\end{equation}
The needed lower bound for the left-hand side will be established below in Lemma \ref{L:coercivity}.

For the single-soliton case, we have coercivity for the classical
functional from Weinstein \cite{W}, which we now recall.   Taking $\eta=\eta(\cdot, \mu,a,\theta,v)$ and
\begin{align*}
R_{(\mu,a,\theta,v)}(u) 
&\defeq -\frac{\partial
H(\eta)}{\partial \mu} M(u) - \frac{\partial
H(\eta)}{\partial v} P(u) + H(u)  \\
&= \frac12(\mu^2+\mu^{-2}v^2) M(u) - \mu^{-1} v P(u) + H(u)
\end{align*}
then
\begin{equation}
\label{E:lya50}
\|w\|_{H^1}^2 \lesssim \la R''(\eta) w, w \ra ,
\end{equation}
provided we assume the orthogonality conditions
\begin{align*}
&\la w, J^{-1} \partial_\mu \eta \ra =0\,,
&&\la w, J^{-1} \partial_a \eta \ra =0 \,, \\
&\la w, J^{-1} \partial_\theta \eta \ra =0\,, 
&&\la w, J^{-1} \partial_v \eta \ra =0 \,.
\end{align*}
A direct proof of \eqref{E:lya50} is possible; see \cite[Prop. 4.1]{HZ1}.

We now prove a similar argument for the double-soliton functional $W_z(u)$ defined in \eqref{E:lya30}.  Before proceeding, we record the formulae
\begin{equation}
\label{E:lya51}
\begin{aligned}
&M_j''(u) = \psi_j \\
&P_j''(u) = -i\psi_j^{1/2}\partial_x \psi_j^{1/2} = - \tfrac12 i \psi_j' - i \psi_j \partial_x \\
& H''(u) = -\tfrac12 \partial_x^2 - 2|u|^2 - u^2 C
\end{aligned}
\end{equation}
where $C$ denotes the operator of complex conjugation.

\begin{lemma}
\label{L:coercivity} 
Suppose $w$ satisfies  the orthogonality conditions
\eqref{E:orth}.  Then
 \begin{equation}
\label{E:lya60}
\|w\|_{H^1}^2 \lesssim \la W_z''(u_z)w,w\ra \,.
\end{equation}
\end{lemma}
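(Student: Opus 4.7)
The plan is to use the Martel-Merle-Tsai partition of unity $\psi_1+\psi_2=1$ to split $w$ into two pieces, each essentially supported near one soliton, and then reduce to the classical single-soliton coercivity \eqref{E:lya50} applied at each $\eta_j$. Set $w_j \defeq \psi_j^{1/2} w$ for $j=1,2$; then $\psi_j|w|^2 = |w_j|^2$ pointwise, so $\|w_1\|_{L^2}^2+\|w_2\|_{L^2}^2 = \|w\|_{L^2}^2$. A direct calculation using $\sum_j \psi_j' = 0$ and the hypothesis $(\psi_j')^2 \lesssim \delta^2 \psi_j$ yields
\[
\sum_{j=1}^2 \|(w_j)_x\|_{L^2}^2 = \|w_x\|_{L^2}^2 + O(\delta^2)\|w\|_{L^2}^2 \,,
\]
so that $\sum_j\|w_j\|_{H^1}^2 = \|w\|_{H^1}^2 + O(\delta^2)\|w\|_{L^2}^2$.

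Inserting the formulas \eqref{E:lya51} into \eqref{E:lya30} and using the pointwise identities above, together with the approximation $u_z \approx \eta_j$ on $\supp\psi_j$ (where the other soliton is exponentially small: $\supp\psi_j\subset\{|x-a_{3-j}|\geq 3a_0/4\}$, so $|\eta_{3-j}(x)|\lesssim e^{-3a_0/4} = h^{3/4}$ there), gives the decomposition
\[
\la W_z''(u_z)w,w\ra = \sum_{j=1}^2 \la R_j''(\eta_j)w_j,w_j\ra + E \,, \qquad |E|\lesssim (\delta^2+h^{3/4})\|w\|_{H^1}^2\,,
\]
where $R_j$ denotes the classical Weinstein functional \eqref{E:W-classical} evaluated at the parameters of $\eta_j$. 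Since $\partial_{z^\ell}u_z = \partial_\ell\eta_j$ whenever $z^\ell$ is a parameter of the $j$-th soliton, \eqref{E:orth} gives $\la w, J^{-1}\partial_\ell\eta_j\ra = 0$ for four values of $\ell$ per soliton, and replacing $w$ by $w_j$ on the left-hand side produces only an $O(a_0 h^{3/4})\|w\|_{L^2}$ error: on $\supp(1-\psi_j)$ one has $|x-a_j|\geq 3a_0/4$, on which $|\partial_\ell\eta_j(x)|\lesssim \la x-a_j\ra e^{-\mu_j|x-a_j|}\lesssim a_0 h^{3/4}$.

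Finally, decompose $w_j = w_j^\parallel + w_j^\perp$ with $w_j^\parallel\in\spn\{J^{-1}\partial_\ell\eta_j\}$ and $w_j^\perp$ in its $L^2$-orthogonal complement. The previous paragraph gives $\|w_j^\parallel\|_{H^1}\lesssim a_0 h^{3/4}\|w\|_{L^2}$. Applying \eqref{E:lya50} to $w_j^\perp$ and using Cauchy-Schwarz on the cross terms yields $\la R_j''(\eta_j)w_j,w_j\ra \geq c\|w_j\|_{H^1}^2 - O(a_0^2 h^{3/2})\|w\|_{L^2}^2$. Summing in $j$ and combining,
\[
\la W_z''(u_z)w,w\ra \geq c\|w\|_{H^1}^2 - O(\delta^2 + h^{3/4} + a_0^2 h^{3/2})\|w\|_{H^1}^2\,,
\]
which for $h$ small (noting $\delta=4/a_0\to 0$ and $a_0 h^{3/4}\to 0$) gives \eqref{E:lya60}. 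The delicate point is the projection step: $R_j''(\eta_j)$ has one negative direction (along $\partial_\mu\eta_j$), so coercivity is lost unless $w_j$ is genuinely, or nearly, orthogonal to the full four-dimensional subspace. The Martel-Merle-Tsai choice $\delta\sim 1/a_0$ is tuned precisely so that both the partition-of-unity error $\delta^2$ and the tail-overlap error $h^{3/4}$ remain subdominant to the coercive bound.
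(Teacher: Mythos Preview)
Your proof is correct and follows essentially the same approach as the paper's: localize via $w_j=\psi_j^{1/2}w$, reduce $\la W_z''(u_z)w,w\ra$ to $\sum_j\la R_j''(\eta_j)w_j,w_j\ra$ up to errors controlled by the partition-of-unity scale $\delta^2$ and soliton-tail overlap, transfer the orthogonality conditions \eqref{E:orth} to $w_j$ with small defect, and invoke the single-soliton coercivity \eqref{E:lya50}. The only differences are cosmetic: your tail-overlap exponents ($h^{3/4}$, $a_0 h^{3/4}$) are sharper than the paper's stated $h^{1/2}$, and you make the projection step $w_j=w_j^\parallel+w_j^\perp$ explicit whereas the paper absorbs it into a one-line appeal to \eqref{E:lya50}.
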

\begin{proof}
Denote $w_j=\psi_j^{1/2} w$,
$j=1,2$.   Note that $w_1+w_2 \neq w$, although $1=\psi_1+\psi_2 \leq \psi_1^{1/2}+\psi_2^{1/2} \leq 2$.  Define functionals
$$
W_j(u) 
\begin{aligned}[t]
&= -\frac{\partial H(\eta_j)}{\partial \mu_j} M(u) - \frac{\partial H(\eta_j)}{\partial v_j} P(u) + H(u) \\
&= \tfrac12(\mu_j^2+\mu_j^{-2}v_j^2)M(u) - \mu_j^{-1}v_j P(u) + H(u)
\end{aligned}
$$
We claim that
\begin{equation}
\label{E:est-varepsilon}
\left| \la W''(u_z)w,w\ra - \sum_{j=1}^2 \la W_j''(\eta_j)w_j,w_j\ra \right| \lesssim \delta^2 \|w\|_{H^1}^2
\end{equation}
and
\begin{equation}
\label{E:est-H1}
\left| \|w\|_{H^1}^2 - \sum_{j=1}^2 \|w_j\|_{H^1}^2 \right| \lesssim \delta^2 \|w\|_{H^1}^2
\end{equation}
We now establish \eqref{E:est-varepsilon}.
Note that (see \eqref{E:lya51})
\begin{align*}
\indentalign \la W''(u_z)w,w\ra - \sum_{j=1}^2 \la W_j''(\eta_j)w_j,w_j\ra \\
&= \la (H''(u_z)-\psi_1^{1/2}H''(\eta_1)\psi_1^{1/2} - \psi_2^{1/2}H''(\eta_2)\psi_2^{1/2}) w, w\ra
\end{align*}
The operator appearing on the right-hand side can be decomposed into $A_1+A_2+A_3$ where
\begin{align*}
&A_1 \defeq -\frac12 (\partial_x^2 - \psi_1^{1/2}\partial_x^2 \psi_1^{1/2} - \psi_2^{1/2} \partial_x^2 \psi_2^{1/2}) \\
&A_2 \defeq -2(|u_z|^2 - \psi_1|\eta_1|^2 - \psi_2|\eta_2|^2) \\
&A_3 \defeq -(u_z^2 - \psi_1 \eta_1^2 - \psi_2 \eta_2^2) C
\end{align*}
We compute $A_1$ explicitly:
$$A_1 = \sum_{j=1}^2 (-\frac12 \psi_j' \partial_x - \frac14 \psi_j^{-1}(\psi_j')^2 -\frac12 \psi_j'') = - \frac14 \sum_{j=1}^2\psi_j^{-1}(\psi_j')^2$$
where we have used that $\psi_1+\psi_2 =1$ in the second equality.  We have $(\psi_j')^2 \lesssim \delta^2 \psi_j$ by the corresponding property of $\Psi$ and thus $A_1$ is a multiplication operator with symbol bounded by $\delta^2$.   By the support properties of $\psi_1$, $\psi_2$, we obtain that the multiplication operators $A_2$, $A_3$ have symbols bounded by $h$.  This completes the proof of \eqref{E:est-varepsilon}, and the proof of \eqref{E:est-H1} is similar.

By the orthogonality conditions \eqref{E:orth},
$$\la w_1, J^{-1} \partial_{z^j} u_z \ra = - \la (1-\psi_1^{1/2})w, J^{-1} \partial_{z^j}u_z\ra\,,$$ 
and we have, for example
$$\la w_1,J^{-1} \partial_{\mu_1} u_z \ra =  - \la (1-\psi_1^{1/2})w,J^{-1} \partial_{\mu_1} u_z \ra = - \la (1-\psi_1^{1/2})w,J^{-1} \partial_{\mu_1} \eta_1 \ra\lesssim
h^{1/2} \|w\|_{L^2}\,,$$
due to the fact that $\| (1-\psi_1^{1/2}) J^{-1} \partial_{\mu_1} \eta_1 \|_{L^2} \lesssim h^{1/2}$.
Hence, by the coercivity of the classical Lyapunov
functional (see discussion surrounding \eqref{E:lya50}), we have that
$$\sum_{j=1}^2 \la W_j''(\eta_j)w_j,w_j\ra +  h \|w\|_{L^2}^2 \gtrsim \sum_{j=1}^2
\|w_j\|_{H^1}^2\,,$$  
From this and  \eqref{E:est-varepsilon}, \eqref{E:est-H1}, we obtain \eqref{E:lya60}.
\end{proof}

\section{Conclusion of proof}
\label{S:conclusion}

In this section, we conclude the proof of Theorem \ref{T:main1}.  

Recall that $h=e^{-a_0}$ which implies that $a_0 = \log h^{-1}$, and that we are in the even/odd solution setting with \eqref{E:V-11}, \eqref{E:V-12} in place.

We introduce $0<\delta \ll 1$.  The constant $\delta$ is absolute and is chosen sufficiently small in terms of the accumulation of numerous other absolute constants appearing in several estimates.  In our argument, $c$ will represent a large absolute constant that may change (typically enlarge) from one line to the next. At the conclusion of the argument, we can finally declare that $\delta$ should be taken small enough that $c\delta < \frac12$.  This does not constitute circular reasoning since one could tally up all of the absolute constants (the $c$'s) in each estimate in advance of executing the argument and suitably define $\delta$ \emph{a priori} but this is not a practical manner of exposition.

Recall that we started by defining
$$w(t) = u(t) - u_{z(t)}$$
where $z$ was selected by the implicit function theorem so that orthogonality conditions \eqref{E:orth} hold.  By continuity of the flow in $H^1$, this is possible at least up to some small positive time.  Let $T$ be the supremum of all times $0<T\leq h^{-1-\delta}$ for which 
\begin{align}
\label{E:BS1} &\| w\|_{L_{[0,T]}^\infty H_x^1} \leq h^{3/2} \\
\label{E:BS2} &|v| \leq h^{1-\delta} \\
\label{E:BS3} &a \geq a_0^{1-\delta}
\end{align}
Note that the requirement \eqref{E:BS1} implies
$$\|w\|_{H^1}^3 \leq h \|w\|_{H^1}^2 + h^3 \|w\|_{H^1} \,,$$
and enables us to discard cubic error terms in $w$ in our estimates.

In the course of the argument that follows, we work on the time interval $[0,T]$.  At the conclusion of the argument, we are able to assert that either $T=\delta h^{-1}\log h^{-1}$ or that \eqref{E:BS2} or \eqref{E:BS3} fail to hold at $t=T$.

It follows from the decomposition \eqref{E:422}
and the bootstrap assumptions \eqref{E:BS2}, \eqref{E:BS3} above that
(see Appendix \ref{A:computations}) 
$$ \sup_{1\leq n \leq 8} \| J^{-1} \partial_{z^n}\Pi_z^\perp JH'(u_z) \|_{H_x^1} \leq h^{2-c\delta}$$
Let 
$$\epsilon \defeq h^{4-c\delta} + \|w\|_{L_{[0,T]}^\infty H_x^1}^2$$
By Lemma 3.1 and the computations in Appendix \ref{A:computations}, the ODEs
$$
\left\{
\begin{aligned}
&\dot \mu = (-1)^\sigma (8a-4)ve^{-2a} + O(\epsilon)\\
&\dot a = \mu^{-1}v + (-1)^\sigma (-4a+\frac23\pi^2)ve^{-2a}+  O(\epsilon)\\
&\dot \theta = \frac12 \mu^2 + \frac12 v^2 \mu^{-2} + 18(-1)^\sigma e^{-2a} +  O(\epsilon)\\
&\dot v=-4(-1)^\sigma e^{-2a} + O(\epsilon)
\end{aligned}
\right.
$$
hold on $[0,T]$.    By the first of these equations and \eqref{E:BS1}, \eqref{E:BS2}, \eqref{E:BS3}, we have $|\mu -1 | \leq h^{2-c\delta}$.  From the above ODEs and \eqref{E:BS1}, \eqref{E:BS2}, \eqref{E:BS3}, we can deduce bounds on $\dot \mu$, $\dot a$, $\dot \theta$, and $\dot v$ that justify the estimates involved in the construction of $\nu_z$ in \S 4 summarized in Lemma 4.2.  The result is that
\begin{equation}
\label{E:424}
\| \nu_z \|_{H_x^1} \lesssim h^{2-c\delta}
\end{equation}
and \eqref{E:approx-sol-1} holds with 
$$\|  F \|_{H_x^1} \lesssim \epsilon \,.$$
By \eqref{E:423},
\begin{equation}
\label{E:425}
|\partial_t L_z(u)| \lesssim h \|\tilde w\|_{H^1}^2 + h^3 \|\tilde w\|_{H^1} + \|\tilde w \|_{H^1}^3
\end{equation}
where we recall that $\tilde w = w - \nu_z$.
By \eqref{E:424}, $\|\tilde w\|_{H^1} \lesssim \| w\|_{H^1} + h^{2-c\delta}$, we obtain from \eqref{E:425} that
\begin{equation}
\label{E:426}
|\partial_t L_z(u)| \lesssim h\|w\|_{H^1}^2 + h^{5-c\delta} \,.
\end{equation}
From \eqref{E:lya60} and \eqref{E:lya80}, the bound
\begin{equation}
\label{E:427}
\|w\|_{H^1}^2 \lesssim L_z(u) + h^{4-c\delta}
\end{equation}
holds.  Combining \eqref{E:426} and \eqref{E:427}, we obtain the bound
$$|\partial_t L_z(u)| \lesssim hL_z(u) + h^{5-c\delta}$$
By Gronwall's inequality, it follows that
$$L_z(u) \lesssim e^{cth}h^{4-c\delta} \,.$$
Provided we restrict to $t \lesssim \delta h^{-1} \log h^{-1}$, this implies
$$L_z(u) \lesssim h^{4-c\delta}$$
Reapplying \eqref{E:427}, we obtain
\begin{equation}
\label{E:430}
\|w \|_{H^1} \lesssim h^{2-c\delta} \,.
\end{equation}
At this point, we can declare that $\delta$ should have been taken sufficiently small so that $c\delta < \frac12$, where $c$ is as it appears in \eqref{E:430}.  It follows that \eqref{E:BS1}, \eqref{E:BS2}, \eqref{E:BS3} can only break down provided $T\gtrsim \delta h^{-1}\log h^{-1}$ or if either \eqref{E:BS2} or \eqref{E:BS3} fails at $t=T$.

We will see the from the following ODE analysis that \eqref{E:BS3} always holds;
in the same phase (even solution, attractive) case, the assumption \eqref{E:BS2} first fails at $T\sim h^{-1}$, and in the opposite phase (odd solution, repulsive) case, \eqref{E:BS2} remains valid and we can reach $T\sim h^{-1}\log h^{-1}$.

Since we now restrict to $t \lesssim \delta h^{-1} \log h^{-1}$, we can assume that \eqref{E:430} holds and thus $\epsilon\lesssim h^{4-c\delta}$.

Let $\tilde z = (\tilde \mu, \tilde a, \tilde \theta, \tilde v)$ solve
$$
\left\{
\begin{aligned}
&\dot {\tilde \mu} = (-1)^\sigma (8\tilde a-4)\tilde ve^{-2\tilde a} \\
&\dot {\tilde a} = \tilde v \\
&\dot {\tilde \theta} = \frac12 \tilde \mu^2 + \frac12 \tilde v^2 \tilde \mu^{-2} + 18(-1)^\sigma e^{-2\tilde a} \\
&\dot {\tilde v}=-4(-1)^\sigma e^{-2\tilde a} 
\end{aligned}
\right.
$$
These tilde equations appear in the statement of Theorem \ref{T:main1} without tildes.  Note that the $\dot {\tilde a}$ and $\dot {\tilde v}$ equations can be solved separately as discussed in \S\ref{S:introduction}.  
Let $\bar a = \mu a - \tilde a$ and $\bar v = v - \tilde v$.  Then we get the system
$$
\left\{
\begin{aligned}
&\dot{\bar a} = \bar v + O(h^{3-c\delta}) \\
&\dot{\bar v} = -2 (-1)^\sigma e^{-2\tilde a}\bar a + O(h^{4-c\delta})
\end{aligned}
\right.
$$
Let $\gamma = (\bar a)^2 + h^{-2}\bar v^2$.  Then, substituting
$$\dot \gamma \lesssim h \bar a (h^{-1}\bar v) + (h^{1/2}\bar a) h^{\frac52 - c\delta} + (h^{-1/2}\bar v) (h^{\frac52-c\delta})$$
By the inequality $\alpha\beta \leq \alpha^2 + \beta^2$, we obtain
$$\dot \gamma \lesssim h\gamma +  h^{5-c\delta}$$
By Gronwall's inequality,
$$\gamma \lesssim e^{cht}(\gamma_0 + h^{4-c\delta})$$
It follows that
$$|\bar a| \lesssim h^{2-\delta}, \qquad |\bar v| \lesssim h^{3-c\delta}$$
These errors only affect the $\dot \mu$ equation at order $h^{4-c\delta}$ so $\mu$ is only affected at order $h^{3-c\delta}$. Given this, the $\dot \theta$ equation is only affected at order $h^{3-c\delta}$.  Thus, the impact on $\theta$ is of size $h^{2-c\delta}$.  In conclusion
$$ |\bar \theta | \lesssim  h^{2-c\delta} \, \qquad |\bar \mu| \lesssim h^{3-c\delta}$$
Thus 
$$\| u_z - u_{\tilde z} \|_{H^1} \lesssim h^{2-c\delta}$$
Since $u_z$ in Theorem \ref{T:main1} in fact means $u_{\tilde z}$, this completes the proof of Theorem \ref{T:main1}.

\appendix

\section{Computations}
\label{A:computations}

We shall carry out the computations of the ODEs appearing in \eqref{E:eff-dyn} in Lemma \ref{L:ODEs} and show that they are equivalent to \eqref{E:V-3}, with errors of size $O(h^{4-})$.  This is carried out without making the symmetry assumption on the solution.  When the even/odd symmetry assumption is imposed, we will carry out the integrals appearing in \eqref{E:V-3} and show that the ODEs claimed in the statement of Theorem \ref{T:main1} hold.

Denote $u_z=\eta_1+\eta_2$. Let $L = \{1,2,5,6\}$ denote the indices that refer to the left soliton and $R=\{3,4,7,8\}$ denote the indices that refer to the right soliton.
The coefficient matrix of the symplectic form is
$$(a_{\ell m}) = A = 
\begin{bmatrix}
0 & -I \\
I & 0 
\end{bmatrix}
+ O(h^{2-})
$$ 
where the $O(h^{2-})$ contributions come from $a_{\ell m}$ with $\ell \in L$ and $m\in R$ (and vice-versa, but of course $a_{\ell m} = -a_{m \ell}$).  Fortunately, we do not need to compute these terms.  Note that
$$(a^{\ell m}) = A^{-1} = 
\begin{bmatrix}
0 & I \\
-I & 0 
\end{bmatrix}
+ O(h^{2-})
$$ 

In fact, we can substantially reduce the complexity of computation in applying Lemma \ref{L:ODEs} by observing that $JH'(u_z)$ decomposes into terms parallel to $M$ plus other terms which are $O(h^{2-})$.   To this end, we expand:
$$H'(u_z) = H'(\eta_1)+H'(\eta_2)+H_p''(\eta_1)\eta_2 + H_p''(\eta_2)\eta_1 +O(h^4)\,,$$
where
$$H_p(u) = -\frac14 \int |u|^4 \,.$$
Moreover, we have 
$$JH'(\eta) = \partial_v H(\eta) \partial_a \eta - \partial_\mu H(\eta) \partial_\theta \eta \,.$$
Hence, 
\begin{equation}
\label{E:ApA1}
H'(u_z) = \sum_{j=1}^8 b^j J^{-1}\partial_{z^j}u_z + H_p''(\eta_1)\eta_2 + H_p''(\eta_2)\eta_1
\end{equation}
where
$$
b_2 = \partial_{v_1}H(\eta_1)\,, \quad 
b_4 = \partial_{v_2}H(\eta_2) \,, \quad 
b_5 = -\partial_{\mu_1}H(\eta_1) \,, \quad 
b_7 = -\partial_{\mu_2}H(\eta_2)$$
and all other $b_j=0$.
Observe that $\la H_p''(\eta_1)\eta_2, \partial_{z^\ell}u_z \ra=O(h^{4-})$ for any $\ell\in R$ and $\la H_p''(\eta_2)\eta_1, \partial_{z^\ell}u_z \ra=O(h^{4-})$ for any $\ell \in L$.  Note further that for $\ell \in L$ (and hence $\partial_{z^\ell} u_z = \partial_{z^\ell} \eta_1$) we have
\begin{equation}
\label{E:ApA2}
\la H_p''(\eta_1)\eta_2, \partial_{z^\ell}u_z\ra = \la \eta_2, H_p''(\eta_1)\partial_{z^\ell} \eta_1\ra = \partial_{z^\ell}\la \eta_2, H_p'(\eta_1)\ra
\end{equation}
Similarly, for $\ell\in R$ and (and hence $\partial_{z^\ell} u_z = \partial_{z^\ell} \eta_2$) we have
\begin{equation}
\label{E:ApA3}
\la H_p''(\eta_2)\eta_1, \partial_{z^\ell}u_z\ra = \la \eta_1, H_p''(\eta_2)\partial_{z^\ell} \eta_1\ra = \partial_{z^\ell}\la \eta_1, H_p'(\eta_2)\ra
\end{equation}
From \eqref{E:ApA1},\eqref{E:ApA2}, and \eqref{E:ApA3}, we obtain
$$\partial_{z^\ell} H(u_z) = -\sum_{j=1}^8 b^j a_{j\ell} + \partial_{z^\ell}\la \eta_1, H_p'(\eta_2)\ra + \partial_{z^\ell} \la \eta_2, H_p'(\eta_1)\ra $$
It follows that the equations \eqref{E:eff-dyn2} reduce to
$$\dot z^m= b^m - \sum_{\ell\in L} \partial_{z^\ell} \la H_p'(\eta_1),\eta_2\ra a^{\ell m} - \sum_{\ell\in R} \partial_{z^\ell} \la H_p'(\eta_2),\eta_1\ra a^{\ell m} + O(h^{4-})$$
It suffices in this sum to discard $O(h^{2-})$ terms in $a^{\ell m}$. Thus we obtain the equations
$$
\left\{
\begin{aligned}
&\dot z^1 = b^1 + \partial_{z^5} \la H_p'(\eta_1), \eta_2\ra + O(h^{4-}) \\
&\dot z^2 = b^2 + \partial_{z^6} \la H_p'(\eta_1), \eta_2\ra + O(h^{4-}) \\
&\dot z^3 = b^3 + \partial_{z^7} \la H_p'(\eta_2), \eta_1\ra + O(h^{4-}) \\
&\dot z^4 = b^4 + \partial_{z^8} \la H_p'(\eta_2), \eta_1\ra + O(h^{4-}) \\
&\dot z^5 = b^5 - \partial_{z^1} \la H_p'(\eta_1), \eta_2\ra + O(h^{4-}) \\
&\dot z^6 = b^6 - \partial_{z^2} \la H_p'(\eta_1), \eta_2\ra + O(h^{4-}) \\
&\dot z^7 = b^7 - \partial_{z^3} \la H_p'(\eta_2), \eta_1\ra + O(h^{4-}) \\
&\dot z^8 = b^8 - \partial_{z^4} \la H_p'(\eta_2), \eta_1\ra + O(h^{4-}) 
\end{aligned}
\right.
$$
In more direct language, these equations are
\begin{alignat*}{2}
&\dot \mu_1 = && +\partial_{\theta_1} \la H_p'(\eta_1),\eta_2\ra + O(h^{4-}) \\
&\dot a_1 = +\partial_{v_1} H(\eta_1) &&+\partial_{v_1} \la H_p'(\eta_1), \eta_2\ra + O(h^{4-})\\
&\dot \mu_2 = && +\partial_{\theta_2}\la H_p'(\eta_2),\eta_1\ra + O(h^{4-})\\
&\dot a_2 =  +\partial_{v_2}H(\eta_2) &&+\partial_{v_2} \la H_p'(\eta_2), \eta_1 \ra
+ O(h^{4-}) \\
&\dot \theta_1 = -\partial_{\mu_1} H(\eta_1)&&-\partial_{\mu_1} \la H_p'(\eta_1),\eta_2\ra + O(h^{4-}) \\
&\dot v_1 =  &&-\partial_{a_1} \la H_p'(\eta_1), \eta_2\ra + O(h^{4-})\\
&\dot \theta_2 = -\partial_{\mu_2}H(\eta_2) &&-\partial_{\mu_2}\la H_p'(\eta_2),\eta_1\ra + O(h^{4-})\\
&\dot v_2 =  &&-\partial_{a_2} \la H_p'(\eta_2), \eta_1 \ra
+ O(h^{4-}) 
\end{alignat*}

We note that these equations hold in general, without assuming that the solution is even or odd.

The next step is then to compute $\la H_p'(\eta_1), \eta_2\ra$ and $\la H_p'(\eta_2),\eta_1\ra$.  Let $\phi(x)=\sech x$.  We have
\begin{equation}
\label{E:int-Ham}
\begin{aligned}
\la H_p'(\eta_2), \eta_1 \ra = &-\Re \Big( 
e^{i(\theta_2-\theta_1)} e^{i(\mu_1^{-1}v_1a_1-\mu_2^{-1}v_2a_2)} \mu_2^3 \mu_1 \\ 
& \qquad \times \int e^{i(\mu_2^{-1}v_2-\mu_1^{-1}v_1)x} \phi^3(\mu_2(x-a_2)) \phi(\mu_1(x-a_1) \, dx \Big) 
\end{aligned}
\end{equation}
At this point we will make the even/odd assumption.  In the even case, we may set
\begin{equation}
\label{E:even-defs}
(\mu, a, \theta, v) \defeq (\mu_1, -a_1, \theta_1, -v_1) = (\mu_2, a_2, \theta_2, v_2)
\end{equation}
Then $\theta_1-\theta_2 = 0$ .
In the odd case, we may set
\begin{equation}
\label{E:odd-defs}
(\mu, a, \theta, v) \defeq (\mu_1, -a_1, \theta_1-\pi , -v_1) = (\mu_2, a_2, \theta_2, v_2)
\end{equation}
Then $\theta_1-\theta_2 = \pi$. 

In either the even or odd case, we find that $\dot \mu = \dot \mu_1 =\dot \mu_2 = O(h^{3-})$, from which it follows that 
\begin{equation}
\label{E:mu}
\mu = \mu_1 = \mu_2 = 1+ O(h^{2-})
\end{equation}  Take $\sigma=0$ in the even case and $\sigma = 1$ in the odd case.  We compute the equations for $\dot \mu_2$, $\dot a_2$, $\dot \theta_2$, $\dot v_2$ by carrying out the appropriate derivative of \eqref{E:int-Ham}, and then evaluating the resulting expression using \eqref{E:mu}, \eqref{E:even-defs}, \eqref{E:odd-defs}.
By residue calculus computations and asymptotic expansion,
\begin{align*}
\alpha(\xi,a) &\defeq \int_{-\infty}^{+\infty} e^{-ix \xi} \, \phi^3 (x-a) \, \phi(x+a) \, dx \\
&= e^{-2a}[4+(2-4a)i\xi + ( -\frac{\pi^2}{6}+2a-2a^2)\xi^2+ O(\xi^3)] + O(e^{-4a})
\end{align*}
and
\begin{align*}
\beta(\xi,a) &\defeq \int_{-\infty}^{+\infty} e^{-ix \xi} \, [\phi^3]' (x-a) \, \phi(x+a) \, dx \\
&= e^{-2a}[4 + (6-4a)i\xi]+O(h^{4-})
\end{align*}

We find that
$$
\left\{
\begin{aligned}
&\dot \mu = (-1)^\sigma \Re[-i\alpha] + O(h^{4-}) \\
&\dot a = \mu^{-1}v + (-1)^\sigma\Re[+ia \alpha + \partial_\xi \alpha] + O(h^{4-})\\
&\dot \theta = \frac12\mu^2 + \frac12 v^2\mu^{-2} + (-1)^{\sigma} \Re [(iva + 3)\alpha + v(\partial_\xi\alpha)] +\Re(a\beta-i\partial_\xi\beta) +O(h^{4-})\\
&\dot v = (-1)^\sigma \Re [-iv\alpha -\beta]+O(h^{4-})
\end{aligned}
\right.
$$
where $\alpha$, $\partial_\xi\alpha$, and $\beta$ are evaluated at $\xi=-2v$.

Substituting, we obtain
$$
\left\{
\begin{aligned}
&\dot \mu = (-1)^\sigma (8a-4)ve^{-2a} + O(h^{4-})\\
&\dot a = \mu^{-1}v + (-1)^\sigma (-4a+\frac23\pi^2)ve^{-2a}+O(h^{4-})\\
&\dot \theta = \frac12 \mu^2 + \frac12 v^2 \mu^{-2} + 18(-1)^\sigma e^{-2a} +O(h^{4-})\\
&\dot v=-4(-1)^\sigma e^{-2a} + O(h^{4-})
\end{aligned}
\right.
$$

The system $(\mu a, v)$ can be solved with error $O(h^{2-})$; from which $(a,v)$ can be recovered with error $O(h^{2-})$.  At this accuracy the dynamics are comparable to 
$$
\left\{
\begin{aligned}
&\dot a = v\\
&\dot v = -4(-1)^\sigma e^{-2a}
\end{aligned}
\right.
$$
Then $\mu$ can be solved with ``explicit'' order $h^2$ term coming from the order $h^3$ term in the equation for $\dot \mu$, and then $\dot \theta$ can be obtained with error of size $h^2$.

\end{document}